\numberwithin{equation}{section}
\newcommand{\minimize}{\textrm{minimize}}
\newcommand{\R}{\mathbb{R}}
\newcommand{\M}{\mathcal{M}}
\newcommand{\calM}{\mathcal{M}}
\newcommand{\calU}{\mathcal{U}}
\newcommand{\U}{\mathcal{U}}
\newcommand{\calO}{\mathcal{O}}
\newcommand{\red}{\textcolor{red}}
\definecolor{purple}{rgb}{0.74, 0.2, 0.64}
\newcommand{\calT}{\mathcal{T}}
\newcommand{\calN}{\mathcal{N}}
\newcommand{\D}{\mathrm{D}}
\newcommand{\p}{^{\perp}}
\renewcommand{\P}{\mathrm{P}}
\newcommand{\txt}{\text}
\newcommand{\transpose}{^\top}
\newcommand{\grad}{\mathrm{grad}}
\newcommand{\Hess}{\mathrm{Hess}}
\newcommand{\T}{\mathrm{T}}
\newcommand{\On}{\mathrm{O}(n)}
\newcommand{\SOn}{\mathrm{SO}(n)}
\newcommand{\SO}{\mathrm{SO}}
\newcommand{\Rn}{\mathbb{R}^n}
\newcommand{\RN}{\mathbb{R}^N}
\newcommand{\Rns}{\mathbb{R}^{n\times s}}
\newcommand{\inner}[2]{\left\langle{#1},{#2}\right\rangle}
\newcommand{\I}{\mathrm{I}}
\newcommand{\SURE}{\mathrm{SURE}}
\newcommand{\RMSE}{\mathrm{RMSE}}
\DeclareMathOperator*{\argmin}{arg\,min}
\newcommand{\range}{\operatorname{range}}
\newcommand{\Grass}{\mathrm{Grass}}
\newcommand{\aref}[1]{\hyperref[#1]{A\ref{#1}}}
\newcommand{\norm}[1]{\left\|#1\right\|}
\newcommand{\fronorm}[1]{\left\|#1\right\|_\mathrm{F}}
\newtheorem{theorem}			     {Theorem}	[section]
\newtheorem{definition}	         {Definition}[section]
\newtheorem{assumption} {A\ignorespaces}%[section]
\newtheorem{remark}			     {Remark}	[section]
\definecolor{listinggray}{gray}{0.9}
\definecolor{lbcolor}{rgb}{0.9,0.9,0.9}
\newcommand{\noiseomega}{\mathlarger{\mathlarger{\omega}}}
\newcommand{\GrassNq}{\Grass(N,q)}
\newcommand{\RNq}{\R^{N\times q}}
\title{Registration of algebraic varieties using Riemannian optimization}
\author[1,2]{Florentin Goyens}
\author[2]{Coralia Cartis}
\author[3]{St\'ephane Chr\'etien}
\date{\today}
\affil[1]{\textsc{LAMSADE} Universit\'e Paris Dauphine PSL, France}
\affil[2]{University of Oxford, UK, mathematical institute}
\affil[3]{National Physical Laboratory, UK\footnote{This work was generously supported by the  National Physical Laboratory (NPL), United Kingdom.}}
\affil[ ]{Corresponding author: \url{goyensflorentin@gmail.com} }
\begin{document}
\tabulinesep=1.2mm
\maketitle

\begin{abstract}
We consider the point cloud registration problem, the task of finding a transformation between two point clouds that represent the same object but are expressed in different coordinate systems. Our approach is not based on a point-to-point correspondence, matching every point in the source point cloud to a point in the target point cloud.
% as this can present difficulties when such matching is unavailable, or that the two objects only overlap partially.
% we circumvent those issues by using  
 Instead, we assume and leverage a low-dimensional nonlinear geometric structure of the data. Firstly, we approximate each point cloud by an algebraic variety (a set defined by finitely many polynomial equations). This is done by solving an optimization problem on the Grassmann manifold, using a connection between algebraic varieties and polynomial bases.
Secondly, we solve an optimization problem on the orthogonal group to find the transformation (rotation $+$ translation) which makes the two algebraic varieties overlap. We use second-order Riemannian optimization methods for the solution of both steps. 
Numerical experiments on real and synthetic data are provided, with encouraging results. Our approach is particularly useful when the two point clouds describe different parts of an objects (which may not even be overlapping), on the condition that the surface of the object may be well approximated by a set of polynomial equations. 
 The first procedure---the approximation---is of independent interest, as it can be used for denoising data that belongs to an algebraic variety. We provide statistical guarantees for the estimation error of the denoising using Stein's unbiased estimator. \newline
%Although the registration problem appears in a variety of settings, our work was inspired by medical imaging applications. Computed tomography (CT) scans are used as dental imaging tools, as they are able to measure a tooth and produce a point cloud that represents its surface. Comparing different scans of the same tooth taken months or years apart can be helpful to assess tooth decay. Therefore, it is desirable to have an automatic way to overlap two scans, which may have been taken from a different angle. 
\textbf{keywords:} point cloud registration, Riemannian optimization, algebraic varieties.
\end{abstract}
%\tdblue{Should we mention the assumptions of the Acha paper in this setting ? The Acha paper gives a result for alternating minimization, but there are some discussions on Lipschitz constants and convergence using KL property.}

%\tableofcontents

\section{Introduction}
%\td{first paragraph should be a bit more beefy. Motivate registration a bit and say that data may be nonlinear in life ? so we want to look at varieties.}

The problem of point cloud registration consists in finding a transformation between two point clouds that represent the same shape but are expressed in different coordinate systems. It appears for instance in computer vision~\citep{sharp2004multiview}, distributed approaches to molecular conformation~\citep{cucuringu2012eigenvector}, and sensor network localization~\citep{cucuringu2012sensor}. In this work, we consider instances where the shape of the point clouds can be represented by an algebraic variety (a set defined by finitely many polynomial equations). We develop a registration method based on manifold optimization which exploits the algebraic variety structure of the data.

Consider two algebraic varieties $V_1, V_2 \subset \Rn$ of the same degree $d$ (Definition~\ref{def:variety}). We assume the existence of a rigid transformation $\calT \colon\R^n \rightarrow \R^n$ that makes the varieties $V_1$ and $V_2$ overlap. 
\begin{assumption}\label{assu:variety_model}
There exists $Q\in \On$ and $a\in \R^n$ such that for all $x_1 \in V_1$, we have
\begin{equation}
\mathcal{T}(x_1) := Qx_1 + a \in V_2,
\label{eq:model}
\end{equation}
where $\On = \left\lbrace Q\in \R^{n\times n}: Q^\top Q  =\I_n\right\rbrace$ denotes the \emph{orthogonal group}.
\end{assumption}
Intuitively, $V_1$ and $V_2$ represent the same shape but in different coordinate systems, which are connected by a rotation and translation.

 A matrix $X\in \Rns$ represents a \emph{point cloud} in $\Rn$ and each column of $X$ is called a \emph{data point} or \emph{sample}. Let $M_1 \in \R^{n\times s_1}$ and $M_2 \in \R^{n\times s_2}$ be composed of respectively $s_1$ samples in $V_1$ and $s_2$ samples in $V_2$. Given $M_1$ and $M_2$, our task is to estimate $Q\in \mathrm{O}(n)$ and $a\in \R^n$ which define the transformation $\mathcal{T}$. The samples of $M_1\in \R^{n\times s_1}$ and $M_2\in \R^{n \times s_2}$ are independent and the two point clouds may have a different number of samples ($s_1 \neq s_2$). The goal is therefore not to establish a point-to-point matching, but to find a transformation such that the varieties $V_1$ and $V_2$ --- observed through the samples in $M_1$ and $M_2$ --- overlap as best as possible.

\begin{center}
\begin{tikzpicture}[scale =0.3]
\begin{scope}[shift = {(14,6)},scale=1,  rotate=-45,transform shape ]
\begin{axis}[hide axis,colormap/violet]
\addplot3[
	surf,
	domain=-2:2,
	domain y=-1.3:1.3,
] 
	{exp(-x^2-y^2)*x};
\end{axis}
\filldraw[fill=black] (1,2.5) circle (.1);
\filldraw[fill=black] (5,3.6) circle (.1);
\filldraw[fill=black] (5.4,2.9) circle (.1);
\filldraw[fill=black] (6,3.2) circle (.1);
\filldraw[fill=black] (6,3.2) circle (.1);
\filldraw[fill=black] (4.2,2) circle (.1);
\filldraw[fill=black] (4.5,2.5) circle (.1);
\filldraw[fill=black] (4,3) circle (.1);
\filldraw[fill=black] (3,3) circle (.1);
\filldraw[fill=black] (2,2.3) circle (.1);
\filldraw[fill=black] (1.7,2.8) circle (.1);
\filldraw[fill=black] (2.5,3.5) circle (.1);
\end{scope}
\node[above] at (3,4) {$M_1$};
\node[above] at (20,7) {$M_2$};
\draw[->,thick] (8,4) to [out = 45, in = 150] (15,7);
\node[above] at (11,7) {\large{$\mathcal{T} $ ?}};
\begin{axis}[hide axis,colormap/violet]
\addplot3[
	surf,
	domain=-2:2,
	domain y=-1.3:1.3,
] 
	{exp(-x^2-y^2)*x};
\end{axis}
\filldraw[fill=black] (1,2.5) circle (.1);
\filldraw[fill=black] (5,3.6) circle (.1);
\filldraw[fill=black] (5.4,2.9) circle (.1);
\filldraw[fill=black] (6,3.2) circle (.1);
\filldraw[fill=black] (6,3.2) circle (.1);
\filldraw[fill=black] (4.2,2) circle (.1);
\filldraw[fill=black] (4.5,2.5) circle (.1);
\filldraw[fill=black] (4,3) circle (.1);
\filldraw[fill=black] (3,3) circle (.1);
\filldraw[fill=black] (2,2.3) circle (.1);
\filldraw[fill=black] (1.7,2.8) circle (.1);
\filldraw[fill=black] (2.5,3.5) circle (.1);
\end{tikzpicture}
\end{center}

The situation described in~\aref{assu:variety_model} is a \emph{noiseless} setting, in which all the columns of $M_1$ belong exactly to the same algebraic variety. This is not likely to occur in practice, instead we expect that the data approximately belongs to an algebraic variety. It is this \emph{noisy} setting that we consider. Our strategy is to first tackle the problem of finding the algebraic variety that best approximates a point cloud. We do so in Section~\ref{sec:smoothing}. Each point cloud is approximated by an algebraic variety, and then we compute a rigid transformation between these two algebraic varieties (Section~\ref{sec:registration}). 

We model each of these two recovery problems (approximation by an algebraic variety and registration) as the minimization of a function defined on a combination of the following sets: the orthogonal group, the Grassmann manifold, and the Euclidean spaces $\Rns$ and $\Rn$. These optimization problems are instances of the problem class 
\begin{equation}\label{eq:manopt}
\min_{x\in \M} f(x)
\end{equation}
where $f\colon \M\to \R$ is a smooth nonconvex function defined on a Riemannian manifold $\M$. Hence, all the optimization problems that we formulate can be minimized using off-the-shelf Riemannian optimization methods~\citep{absil2008,boumal2023}, which are designed for problems of the form~\eqref{eq:manopt}. When the problems are nonconvex, there are no a priori guarantees to find the global minimizer of $f$ on $\calM$, and our numerical results show that local Riemannian optimization methods perform well on our test problems in Sections~\ref{sec:numerics_denoising} and~\ref{sec:numerics_registration}. We present a summary of the main existing approaches for registration in order to highlight the novelties in our contribution. 

\subsection{Related work}		
Several approaches have been devised for the various types of registration problems that appear in image and pattern analysis. Registration algorithms are classified into rigid and non-rigid approaches. Rigid registration methods find affine transformations---rotations and translations---that preserve distances. The non-rigid registration methods also consider nonlinear transformations. 
%Affine transformations  sometimes also a scaling and a shear component.
		
One of the early instances of registration in the literature is the (orthogonal) Procruste problem~\citep{schonemann1966generalized}. Given two matrices $A,B \in\Rns$, one tries to find the orthogonal matrix $Q^*\in \On$ that best matches $A$ to $B$ in Frobenius norm, that is,
\begin{align*}
Q^* = \argmin_{Q \in \On} \fronorm{QA -B}.
\end{align*}	
This problem is equivalent to finding the nearest orthogonal matrix to $BA\transpose$, 
\begin{align*}
Q^* = \argmin_{Q\in\On} \fronorm{Q - B A\transpose}, 
\end{align*}
which, given the singular value decomposition $B A \transpose = U\Sigma V\transpose$, is $Q^* = UV\transpose$. 

A natural extension is to consider a transformation which combines an orthogonal matrix with a translation. Consider two point clouds $M_1 = \{x_1, \dots, x_s\}$ and $M_2 = \{y_1, \dots, y_s\}$ in $\Rn$, respectively called the \emph{source} and the \emph{target} point clouds. One is obtained through a \emph{rigid transformation} of the other:
\begin{align}
y_k &= Q x_k + a && \text{for } k = 1,\dots,s,
\label{eq:rigid-transformation}
\end{align}
where $Q\in \mathrm{O}(n)$ is a rotation and $a\in \Rn$ is a translation. In this setting, \emph{exact point matching} is assumed: the two point clouds have the same number of samples and each sample matches with one and only one sample from the other point cloud. This leads to the following least squares problem
\begin{align}
(Q^*,a^*) = \argmin_{Q\in \On, a\in \Rn } \sum_{k =1}^s \norm{y_k - Q x_k - a}^2_2.
\label{eq:registartion-ls}
\end{align}
The optimization over $\On\times \Rn$ seems difficult since the set $\On$ is nonconvex. Remarkably, there exists a closed-form solution for the global minimizer of~\eqref{eq:registartion-ls}.~\citet{arun1987} show that the optimal orthogonal matrix is given by $Q^* = UV\transpose$ where $U\Sigma V\transpose$ is the SVD of 
\begin{align*}
\sum_{k=1}^s (x_k-x_c)(y_k - y_c)\transpose,
\end{align*}
with $x_c = (x_1 + \dots + x_s)/s$ and $y_c = (y_1 + \dots + y_s)/s$, the centroids of the two point clouds. The optimal translation is $a^* = y_c - Q^* x_c$. In the case of three or more point clouds to align, no closed-form solution is known. \citet{krishnan2005global} consider the case of multiple point clouds using a rigid transformation and exact point matching. They propose to solve a single optimization problem with one variable on $\SOn = \left\lbrace Q\in \R^{n\times n}: Q^\top Q  =\I_n, \mathrm{det}(Q)= +1\right\rbrace$ for each point cloud. \citet{chaudhury2015global} also consider an arbitrary number of point clouds and show how the least square formulation, which generalizes~\eqref{eq:registartion-ls} to several point clouds, can be relaxed into a convex program, with guarantees of tightness.

These approaches, as well as the Procruste problem, require knowledge of the \emph{point correspondence}: which point from the target corresponds to a given point in the source point cloud. This is unknown in a number of applications. For problems with exact point matching, the crux of the problem then resides in finding the correct point correspondence. The registration is then solved using a single SVD to find the global solution of~\eqref{eq:registartion-ls}. 
%For two point clouds $M_1, M_2$, when exact point matching is assumed and that the point correspondence is known, the SVD solution of the least squares~\eqref{eq:registartion-ls} finds the rigid transformation from $M_1$ to $M_2$.

The iterative closest point (ICP) algorithm, introduced by~\citet{besl1992method}, addresses the issue of the unknown point correspondence. Assuming exact point matching, the ICP algorithm performs rigid registration in an iterative fashion. The method repeats the following two steps until the least squares error becomes smaller than some threshold: 
\begin{enumerate}
\item find a point correspondence using the nearest neighbour rule; \\
\item estimate a rotation and translation by solving~\eqref{eq:registartion-ls} using SVD.
\end{enumerate}
In the original version of ICP, the matching in step 1 is determined by the nearest neighbour criteria. For a point in $M_1$, its corresponding point is the closest point from $M_2$ in Euclidean norm. Note that this may not produce a one-to-one matching. Because the point correspondence changes throughout the iterations, it is difficult to establish theoretical guarantees of convergence for the classical ICP algorithm. The use of the least squares residual~\eqref{eq:registartion-ls} yields a method referred to as point-to-point ICP and is known to be sensitive to noise and outliers~\citep{bellekens2014survey}. 
%\begin{figure}[!htb]
%\centering
%\includegraphics[scale=0.5]{./figures/figures_registration/point2point}
%\caption{Point-to-point ICP}
%\label{}
%\end{figure}

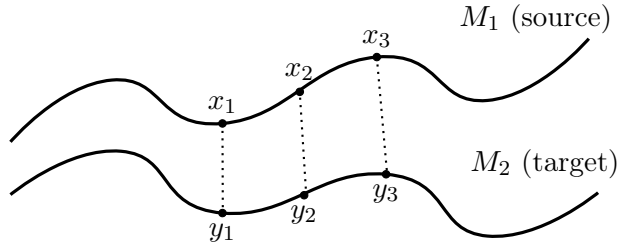
\begin{figure}[!htb]
\centering
\begin{tikzpicture}[scale=0.7]
%\draw [gray!50]  (0,0) -- (2,1) -- (4,0) -- (6,1)  -- (8,0) -- (10,1);
\begin{scope}[rotate = 5]
\draw [very thick, black] plot [smooth, tension=0.9] coordinates {(0,0) (2,1)  (4,0) (7,1) (9,0) (11,1)};
\node[above] at (10,1) {$M_1$ (source)};
\filldraw [black] (4,0) circle (2pt);
\filldraw [black] (7,1) circle (2pt);
\filldraw [black] (5.5,0.47) circle (2pt);
\node[above] at (4,0) {$x_1$};
\node[above] at (5.5,0.5) {$x_2$};
\node[above] at (7,1) {$x_3$};
\draw[thick,dotted] (4,0) -- (3.85,-1.8);
\draw[thick,dotted] (5.5,0.5) -- (5.45,-1.5);
\draw[thick,dotted] (7,1) -- (7,-1.2);
\end{scope}
\begin{scope}[shift = {(0,-1)},scale=1, rotate = -5]
\draw [very thick,black] plot [smooth, tension=0.9] coordinates {(0,0) (2,1)  (4,0) (7,1) (9,0) (11,1)};
\node[above] at (10,1) {$M_2$ (target)};
\filldraw [black] (4,0) circle (2pt);
\filldraw [black] (7,1) circle (2pt);
\filldraw [black] (5.5,0.47) circle (2pt);
\node[below] at (4,0) {$y_1$};
\node[below] at (5.5,0.47) {$y_2$};
\node[below] at (7,1) {$y_3$};
\end{scope}
\end{tikzpicture}
\caption{Point-to-point ICP.}
\end{figure}

%\begin{tikzpicture}
%  \draw [-,snake=snake,
%         segment amplitude=3mm,
%         segment length=20mm] (0,0) -- (5,0);
%   \draw [-,snake=snake,
%         segment amplitude=3mm,
%         segment length=20mm] (0,3) -- (5,3);      
%\end{tikzpicture}
Numerous adaptations of the original ICP algorithm have been proposed, impacting all stages of the algorithm. 

The point-to-plane ICP (also called point-to-surface) is one such adaptation~\citep{chen1992object}. To reduce the algorithm's sensitivity to noise, step 2 minimizes the distance between a source point and the linear approximation of the point cloud at its target point (Figure~\ref{fig:p2p_icp}). 
\begin{figure}[!htb]
\centering
\includegraphics[scale=0.3]{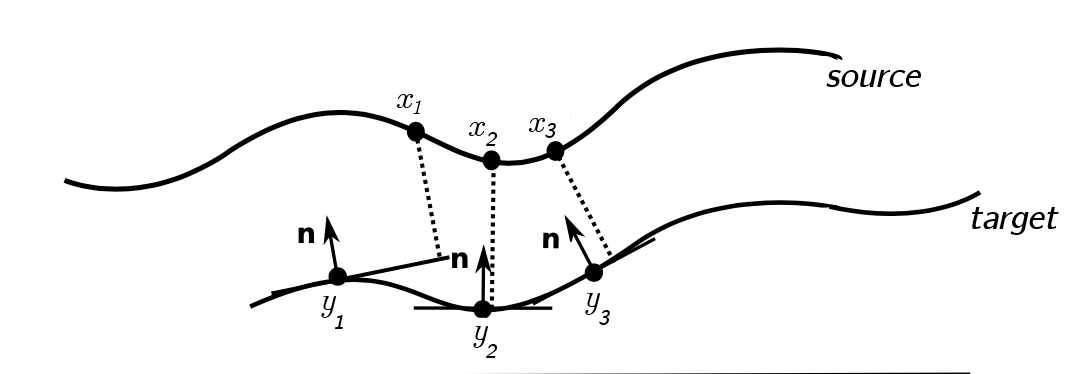}
\caption{Point-to-plane ICP~\citep{bellekens2014survey}.}
\label{fig:p2p_icp}
\end{figure}
%Concretely, in the point-to-plane ICP the least square problem in step 2 is replaced by
This gives
\begin{align}
(Q^*,a^*) = \argmin_{Q\in \mathrm{O}(n), a\in \Rn } \sum_{k =1}^s | \inner{y_k - Q x_k - a}{ \mathbf{n}_k}|^2,
\label{eq:registartion-p2plane}
\end{align}
where $y_k$ is the corresponding point to $x_k$ and $\mathbf{n}_k$ is the normal vector to the linear approximation of the point cloud at $y_k$. This point-to-plane error metric does not have a closed-form solution and is minimized using nonlinear least squares methods. Although each iteration of the point-to-plane ICP is typically slower than an iteration of the point-to-point ICP, the former displays a faster convergence rate in practice~\citep{rusinkiewicz2001efficient}, as is predicted by theoretical analysis~\citep{pottmann2004registration}. 
%An approximation as a linear least squares for the point-to-plane ICP was proposed in~\citep{low2004linear}.
 Plane-to-plane error metrics have also been proposed, which minimize the distance between the two tangent approximations around corresponding points. Generalized ICP allows to leverage the covariance matrix of the point clouds~\citep{segal2009generalized}. Variants of ICP are summarized in~\citep{rusinkiewicz2001efficient}. 
		 
In the non-rigid registration paradigm, the transformation is no longer restricted to the combination of a rotation and a translation. There exists a wide range of approaches for the non-rigid registration which we do not cover in depth. Some of the popular approaches for non-rigid registration include robust point matching~\citep{gold1998new}; thin plate spline robust point matching~\citep{chui2003new}; kernel correlation approach~\citep{tsin2004correlation} and coherent point drift~\citep{myronenko2010point}.
		 		 
The idea of learning the equations that define an algebraic variety also appears in~\citep{breiding2018learning}, where the authors investigate topological properties of the algebraic variety based on samples. \citet{fan2018nonlinear} consider the denoising of images using the polynomial kernel. 

\subsection{Contributions and outline of the paper}
We attempt to find a rigid transformation between two point clouds and assume that the target and source point clouds belong to the same algebraic variety (up to a change in coordinates). Our method does not assume that a point correspondence is given, nor does it attempt to compute one. This avoids the convergence issues that stem from the iterative  estimation of a point correspondence. The transformation that we compute aims that the image of the source by the rigid transformation belongs to the algebraic variety of the target point cloud. This conceptually aligns the algebraic varieties of the source and target point clouds. 

%As opposed to methods which try to identify a point-to-point correspondence, our method finds a transformation such that the algebraic varieties defined by each point cloud overlap. This has the advantage that the method does not need to identify a point-to-point correspondence, which is the crux of several popular algorithms. 

In order to reduce the sensitivity to noise and outliers, variants of the ICP algorithm minimize a point-to-plane or plane-to-plane metric using local linear approximations of the point clouds. The approach we consider can be labelled as a \emph{point-to-algebraic variety} matching, which does not minimize a distance between pairs of points or planes. 
%Our residual does not measure the Euclidean distance between a point and its corresponding point or the tangent plane at a corresponding point. 
Our residual is computed in a space of polynomial features, and measures the distance between a source point and the algebraic variety that defines the target point cloud. This also allows us to seamlessly consider two point clouds of different sizes and handle cases where the source point cloud only overlaps partially with the target point cloud, which is an issue for point-to-point methods.%, but pose no issue in our approach as we merely require that the algebraic variety that approximates the first point clouds overlaps with the variety of the second point cloud. 

 In Section~\ref{sec:polynomials}, we describe the connection between algebraic varieties and polynomial bases, which allow us to computationally represent algebraic varieties. For noisy data that does not belong to an algebraic variety, we have developed a technique to identify an approximation of each point cloud by an algebraic variety (Section~\ref{sec:smoothing}). This is then used to compute the rigid transformation (Section~\ref{sec:registration}). We show numerical results on synthetic examples and 3D medical scans that illustrate the efficiency and accuracy of our approach.  

\section{Preliminaries: algebraic varieties and polynomial features}
\label{sec:polynomials}
We describe how points that belong to an algebraic variety interact with a polynomial basis. An algebraic variety is a subset of $\Rn$ defined by finitely many polynomial equations. 
\begin{definition}[Algebraic variety model~\citep{cox1994ideals}]
\label{def:variety}
Let $\R_d[x]$ be the set of real-valued polynomials of degree at most $d$ over $\R^n$. A real (affine) \emph{algebraic variety} of degree $d$ is defined as the roots of a system of polynomials $P \subset \R_d[x]$:
\begin{equation*}
V(P) = \lbrace x\in \R^{n}: p(x) = 0 \txt{ for all } p \in P\rbrace.
\end{equation*}
We say that the matrix $X = \begin{bmatrix}
x_1 & x_2 & \cdots & x_s
\end{bmatrix} \in \Rns$ follows an algebraic variety model if every column of $X$ belongs to the same algebraic variety, i.e., $x_i \in V(P)$ for all $i = 1,\dots,s$.
\end{definition}
Let 
\begin{equation}\label{eq:N_features}
N = \begin{pmatrix}
n+d\\
d
\end{pmatrix} = \dfrac{(n+d)!}{d! \, n!}, 
\end{equation}
 be the dimension of $\mathbb{R}_d[x]$ and consider polynomials $\varphi_d^{(1)}, \dots,\varphi_d^{(N)}$ that form a basis of $\mathbb{R}_d[x]$. We define the \emph{polynomial features} as
\begin{equation}
\varphi_d \colon \R^n \to \R^{N} \colon  \varphi_d(x) = \begin{pmatrix}
\varphi_d^{(1)}(x)\\
\varphi_d^{(2)}(x)\\
\vdots \\
\varphi_d^{(N)}(x)\\
\end{pmatrix}.
\label{eq:polynomial_features}
\end{equation}
For $X\in \Rns$, we obtain the matrix of polynomial features $\Phi_d(X)$ by applying $\varphi_d$ to each column of $X$,
\begin{align*} 
 \Phi_d(X) = \begin{bmatrix}
\varphi_d(x_1) & \cdots & \varphi_d(x_s)
\end{bmatrix}\in \R^{N\times s}.
\end{align*}
Let the algebraic variety $V(P)\subset \R^n$ be defined by a set of $q$ linearly independent polynomials $P=\lbrace p_1,\dots,p_q\rbrace$, where each polynomial $p_i$ is at most of degree $d$. Consider the features $\Phi_d(X)$ of a matrix $X\in \R^{n \times s}$ whose columns $x_1, \dots, x_s$ belong to $V(P)$ and assume the following:
 \begin{assumption}\label{assu:samples}
The dimensions of the matrix $X\in\Rns$ satisfy $s \geq N-q$.
\end{assumption}
Under~\aref{assu:samples}, we have
\begin{align}\label{eq:variety_null_phi}
x_i &\in  V(P) \textrm{ for all } i = 1,\dots,s && \textrm{ if and only if } &\Phi_d(X)^\top U&=0,
\end{align}
where the $q$ columns of $U \in \R^{N\times q}$ contain the coefficients of the polynomials $p_1,\dots,p_q$ in the basis $\varphi_d$. In this way, we view any matrix $U\in \R^{N\times q}$ as representing $q$ polynomials in the basis $\varphi_d$. The zero set of these polynomials is an algebraic variety, and hence  we associate the matrix $U\in \RNq$ with that algebraic variety. 

This algebraic variety is only a function of $U$ through $\range(U)$. Take $ U_2 \in \R^{N\times q}$ such that $\range( U_2) = \range(U)$ (there exists $W\in \R^{q\times q}$ such that $U_2= U W$). From~\eqref{eq:variety_null_phi}, the algebraic variety defined by $U_2$ is also $V(P)$. Thus, the algebraic variety $V(P)$ depends on the $q$-dimensional subspace $\calU := \range(U) \subset \RN$ but does not depend on the specific matrix $U$ chosen as a basis for that subspace. Hence, it makes sense to use the notation $V_\calU$ for the algebraic variety $V(P)$.

The space of all $q$-dimensional subspaces in $\RN$ is called the Grassmann manifold---written $\Grass(N,q)$--- and is a smooth Riemannian manifold. Any $\U\in \GrassNq$ therefore defines a unique algebraic variety. A point $\U\in \Grass(N,q)$ is represented on a computer by a matrix $U\in \R^{N \times q}$ such that $\mathrm{range}(U) = \U$. 
 
\section{Approximation by an algebraic variety (denoising)}
\label{sec:smoothing}
Consider a point cloud $M\in \Rns$ and its corrupted version
\begin{align}
\hat{M} = M + \noiseomega,
\label{eq:Mhat}
\end{align}
where the columns of $M$ belong to an algebraic variety $V(P)$ (Definition~\ref{def:variety}) and the noise $\noiseomega\in \R^{n\times s}$ has small Frobenius norm compared to $M$: $\fronorm{\noiseomega} \ll\fronorm{M}$.  The goal of denoising is to recover $M$ from $\hat{M}$ as accurately as possible, without knowing the polynomials that define $V(P)$ (Figure~\ref{fig:Mhat}). The distribution of the noisy perturbation $\noiseomega$ may be unknown. In computing $M$, we want to find the algebraic variety that best approximates the point cloud $\hat M$. The noise $\noiseomega$ may be interpreted as the model mismatch between the raw data (e.g medical scans) and an algebraic variety.

\begin{figure}[!h]
\begin{center}
\begin{tikzpicture}[scale=0.6]
\begin{axis}[hide axis,colormap/violet]
\addplot3[
	surf,
	domain=-2:2,
	domain y=-1.3:1.3,
] 
	{exp(-x^2-y^2)*x};
\end{axis}
\filldraw[fill=black] (1,2.5) circle (.1);
\filldraw[fill=red] (1,2.6) circle (.1);
\filldraw[fill=black] (5,3.6) circle (.1);
\filldraw[fill=red] (5,3.8) circle (.1);
\filldraw[fill=black] (5.4,2.9) circle (.1);
\filldraw[fill=red] (5.4,3.1) circle (.1);
\filldraw[fill=black] (6,3.2) circle (.1);
\filldraw[fill=black] (6,3.2) circle (.1);
\filldraw[fill=red] (6,3.6) circle (.1);
\filldraw[fill=black] (4.2,2) circle (.1);
\filldraw[fill=red] (4.2,2.2) circle (.1);
\filldraw[fill=black] (4.5,2.5) circle (.1);
\filldraw[fill=red] (4.5,2.7) circle (.1);
\filldraw[fill=black] (4,3) circle (.1);
\filldraw[fill=black] (3,3.2) circle (.1);
\filldraw[fill=black] (2,2.3) circle (.1);
\filldraw[fill=black] (1.7,2.8) circle (.1);
\filldraw[fill=black] (2.5,3.5) circle (.1);
\filldraw[fill=red] (4,3.5) circle (.1);
\filldraw[fill=red] (3,3.4) circle (.1);
\filldraw[fill=red] (2,2.5) circle (.1);
\filldraw[fill=red] (1.7,3) circle (.1);
\filldraw[fill=red] (2.5,3.7) circle (.1);
\node at (0,3) {$M$};
\node at (0.5,4) {\red{$\hat{M}$}};
\end{tikzpicture}
\end{center}
\caption{Denoising of an algebraic variety}
\label{fig:Mhat}
\end{figure}
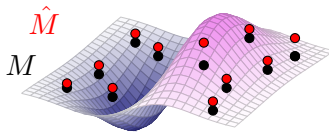

\subsection{Formulation as an optimization problem}

We start with an estimate of $q$, the number of linearly independent polynomials that define the algebraic variety $V(P)$. In order to estimate $V(P)$, we use that every subspace $\calU \in \GrassNq$ defines an algebraic variety $V_\calU$. From~\eqref{eq:variety_null_phi}, the subspace which defines the algebraic variety $V(P)$ belongs to the null space of $\Phi(M)\transpose$. It follows from simple linear algebra that $\mathrm{null}\left(\Phi(M)\transpose \right) = \left( \range \Phi(M)\right)\p$. So we aim to find a subspace orthogonal to $\range \Phi(M)$, that is, a subspace $\calU$ such that $\P_\calU\Phi_d(M)=0$, where $\mathrm{P}_{\U} = UU\transpose \in \R^{N\times N}$ denotes the orthogonal projection on the subspace $\U$ with $\mathrm{range}(U) = \U$ and $U\transpose U = \I_N$. Let $\{p_1, \dots, p_q\}$ be the polynomials that define the algebraic variety $V_{\calU}$, whose coefficients are given by $U = \begin{bmatrix}
u_1 & u_2 & \cdots & u_q
\end{bmatrix} \in \R^{N\times q}$ in the basis $\varphi_d$. For $X\in \Rns$, it follows 
\begin{align}\label{eq:residual_denoising}
\fronorm{\P_{\U} \Phi_d(X)}^2 = \fronorm{U\transpose \Phi_d(X)}^2 = \sum_{i=1}^s \sum_{j=1}^q  ((u_j)\transpose \varphi_d(x_i) )^2 = \sum_{i=1}^s\sum_{j=1}^q  p_j(x_i)^2.
\end{align}
 We minimize~\eqref{eq:residual_denoising} under the constraint that the variable $X$ is not ``far away'' from the noisy input $\hat M$. Given some $\eta>0$, an estimate of the noise level, we
\begin{align}\label{eq:p2_smoothing}
\underset{\substack{\U\in \GrassNq \\
X\in \Rns}}{\minimize}
 \fronorm{\P_\U \Phi_d(X)}^2  
 &&  \text{subject to} &&
\fronorm{X-\hat{M}}^2 \leq \eta.
\end{align}
In~\eqref{eq:p2_smoothing}, the cost function is nonconvex but smooth. It expresses the least square error of the polynomial system that defines the algebraic variety $V_{\calU}$, calculated for every column of $X$.
% This cost function minimizes the components of $\Phi_d(X)$ in the span of $\U$, so that when the cost is zero, all the columns of $\Phi_d(X)$ are orthogonal to the subspace $\calU$. 
The above formulation can be made unconstrained in the variable $X$; for some $\lambda \geq 0$,
\begin{align}\label{eq:lambda_prob}
\underset{\substack{\U\in \GrassNq \\
X\in \Rns}}{\minimize}
 \fronorm{ \P_\U \Phi_d(X)}^2  +\lambda \fronorm{X-\hat{M}}^2.
\end{align}

It can be shown that there exists a $\lambda \geq 0$ such that~\eqref{eq:p2_smoothing} and~\eqref{eq:lambda_prob} are equivalent. Through the lens of Riemannian optimization, Problem~\eqref{eq:lambda_prob} can be viewed as  the unconstrained minimization of a function defined on the manifold \mbox{$\M = \R^{N\times s} \times \Grass(N,q)$}. This general framework allows to use off-the-shelf Riemannian optimization methods~\citep{boumal2023}.

Problem~\eqref{eq:lambda_prob} is nonconvex but there are good initial guesses for the variables X and $\U$, which are respectively $\hat M$ and the last $q$ (left) singular vectors of $\Phi_d(\hat M)$. In Section~\ref{sec:numerics_denoising}, we observe numerically that solving~\eqref{eq:lambda_prob} with second-order Riemannian optimization methods is robust and appears to find the global minimum despite the nonconvexity. In order to solve~\eqref{eq:lambda_prob}, we do not assume knowledge of the noise level $\eta$. To find a suitable value for the regularization parameter $\lambda$, the problem is first solved with $\lambda = 10^{-6}$ (a small value) and then solved again several times with increasing values of $\lambda$. Each problem is initialized with the solution of the previous solve (warm starting). This process is stopped when a good trade-off between the values of $\fronorm{\P_{\calU}\Phi_d (X)}$ and $\fronorm{X-\hat M}$ is reached.

When the residual vanishes, the points belong exactly to an algebraic variety. Equation~\eqref{eq:variety_null_phi} implies that
\begin{align}
 \fronorm{\P_\U \Phi_d(X)}&=0 & \textrm{ if and only if } &&  x_i \in V_{\calU} \textrm{ for all } &i = 1,\dots, s
 \label{eq:variety_identification_smoothing}
\end{align}
where $x_1, \dots, x_s$ denote the columns of $X\in \Rns$. In that case, the last $q$ singular values of $\Phi_d(X)$ are zero and $V_\calU$ is defined by the subspace corresponding to the last $q$ singular vectors $\Phi_d(X)$, which may be computed by a singular value decomposition. Recovering the algebraic variety of the data---despite the presence of noise---is the first part of the registration process in Section~\ref{sec:registration}, where we estimate a transformation such that two algebraic varieties overlap. 

An important practical concern is to estimate $q$, the number of linearly independent polynomials which define the algebraic variety. For hypersurfaces of dimension $n-1$, one polynomial equation defines the surface and $q=1$. In the section on numerical results~\ref{sec:numerics_denoising}, we discuss the choice of a suitable degree $d$ for the model. The next section provides an estimate of the accuracy of the recovery in the case of Gaussian noise, which gives an estimate of the error at a solution which is obtained for a given degree. 

\subsection{Statistical error estimation}
\label{sec:stats}
In this section, we use Stein's unbiased risk estimate~\citep{stein1981estimation} to give guarantees on the error of our denoising procedure. Assume that the noise perturbation $\noiseomega$ in~\eqref{eq:Mhat} follows a normal distribution. That is, for some $\sigma >0$, we have $\noiseomega_{ij} \sim \mathcal{N}(0, \sigma^2)$ for every entry $(i,j)$ of $\noiseomega$. For a given $\hat M$, let $\left(X^*(\hat M), \U^*(\hat M)\right)$ be an isolated local minimizer of~\eqref{eq:lambda_prob}. We view $X^*(\hat M)$ as an estimator of $M$ which depends on the random variable $\noiseomega$. The expected error is defined as $R= \mathbb{E}_\noiseomega \! \left[\fronorm{M - X^*(\hat M)}^2\right]$. Based on~\citep{stein1981estimation}, we have 
\begin{equation}
\hat R =  \fronorm{\hat M - X^*(\hat M)}^2 -ns \sigma^2  + 2 \sigma^2 \sum_{i=1}^n \sum_{j=1}^s \dfrac{\partial X^*_{ij}(\hat M)}{\partial \hat M_{ij}},
\label{eq:stein}
\end{equation}
as an unbiased estimate for $R$, i.e. $\mathbb{E}(\hat R) = R$. The estimate $\hat R$ is called \emph{Stein's unbiased risk estimate}  (SURE). The quantity $ \sum_{i=1}^n \sum_{j=1}^s \partial X^*_{ij}(\hat M)/\partial \hat M_{ij}$ is known as the \emph{divergence} of the estimator $X^*(\hat M)$ and describes the sensitivity of the solution to the input data. It measures the complexity of the model, that is, its tendency to overfit the data~\citep[Eq. 33]{ghojogh2019theory}. 

We propose to use SURE as an alternative to cross validation to evaluate the estimation error and detect the possibility of overfitting. For polynomials of degree $2$, which we use in the numerical examples, the model is simple and the likelihood of overfitting is small. Estimating the generalization error with formula~\eqref{eq:stein} is most useful for several values of model parameters like the degree $d$. 

To compute the divergence in the context of Problem~\eqref{eq:lambda_prob}, we use a version of the implicit function theorem for functions defined on manifolds. For a smooth map $F\colon \M_1\times \M_2 \to \M_3$, we write $\D_1 F$ and $\D_2 F $ for the differential of $F$~\citep[Definition 3.34]{boumal2023}, with respect to its first and second argument, respectively.

 \begin{theorem}[\citep{abraham2012manifolds}, Prop. 3.3.13]\label{thm:implicit_function}
Let $\M_1, \M_2, \M_3$ be manifolds. Let $F\colon \M_1 \times \M_2 \to \M_3$ be  smooth and let $(x_0, y_0) \in \M_1 \times \M_2$ with $F(x_0, y_0) =0$. If $\D_2 F(x_0, y_0)$ is invertible, there exists open neighbourhoods $V_1$ of $x_0$ in $\M_1$ and $V_2 $ of $y_0$ in $\M_2$, and a smooth function $g:V_1\to V_2$ such that for all $x\in V_1$, $F(x,g(x)) = 0$. In addition,
\begin{equation*}
\D g(x_0) = - \left( \D_2 F(x_0, y_0)\right)^{-1} \D_1 F(x_0, y_0).
\end{equation*}
\end{theorem}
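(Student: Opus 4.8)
The plan is to reduce the statement to the classical implicit function theorem on Euclidean space by passing to coordinate charts, and then to read off the derivative formula by differentiating the defining identity $F(x,g(x)) = 0$. This is the standard route for transplanting the Euclidean theorem to manifolds, and since $F(x_0,y_0)=0$ we only ever need a single chart around $0$ in $\M_3$.

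First I would fix charts $(\varphi_1, W_1)$ around $x_0$, $(\varphi_2, W_2)$ around $y_0$, and $(\varphi_3, W_3)$ around $F(x_0,y_0)=0$, normalized so that $\varphi_3$ sends $0$ to the origin of $\R^{m_3}$, where $m_i = \dim\M_i$. Working in these charts, $F$ acquires a coordinate representation $\tilde F := \varphi_3\circ F\circ(\varphi_1^{-1}\times\varphi_2^{-1})$, a smooth map between open subsets of $\R^{m_1}\times\R^{m_2}$ and $\R^{m_3}$. Because the differential of a chart is a linear isomorphism at every point, the intrinsic hypothesis that $\D_2 F(x_0,y_0)$ is invertible is equivalent to invertibility of the partial Jacobian $\partial_2\tilde F$ at the image of $(x_0,y_0)$; in particular this forces $m_2=m_3$, so $\partial_2\tilde F$ is a square invertible matrix, exactly as required by the Euclidean theorem.

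Next I would apply the Euclidean implicit function theorem to $\tilde F$, which yields open neighbourhoods and a smooth map $\tilde g$ satisfying $\tilde F(u,\tilde g(u))=0$ on a neighbourhood of $\varphi_1(x_0)$. Pulling back through the charts, $g := \varphi_2^{-1}\circ\tilde g\circ\varphi_1$ is smooth on a neighbourhood $V_1$ of $x_0$, takes values in a neighbourhood $V_2$ of $y_0$, and satisfies $F(x,g(x))=0$ for all $x\in V_1$, which is the existence claim. The only delicate bookkeeping here is shrinking $V_1,V_2$ and the chart domains so that every composition is defined and lands in the correct domain; this is routine but is where I expect the real work to lie.

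Finally, the derivative formula follows by differentiating the constant map $x\mapsto F(x,g(x))\equiv 0$ at $x_0$. By the chain rule for differentials on manifolds one obtains
\begin{equation*}
\D_1 F(x_0,y_0) + \D_2 F(x_0,y_0)\circ\D g(x_0) = 0,
\end{equation*}
and since $\D_2 F(x_0,y_0)$ is invertible by hypothesis, solving for $\D g(x_0)$ gives precisely $\D g(x_0) = -\left(\D_2 F(x_0,y_0)\right)^{-1}\D_1 F(x_0,y_0)$. I expect the main obstacle to be entirely in the first two steps, namely verifying that invertibility of the intrinsic differential $\D_2 F$ matches the hypothesis of the Euclidean theorem under the chart identifications; once $g$ is constructed, the derivative identity is immediate from the chain rule.
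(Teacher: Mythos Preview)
The paper does not actually prove this theorem: it is stated as a citation of \citep{abraham2012manifolds}, Prop.~3.3.13, and is invoked as a black box in the proof of the subsequent proposition. There is therefore no ``paper's own proof'' to compare against.

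That said, your argument is correct and is exactly the standard route taken in the cited reference: pass to coordinate charts around $x_0$, $y_0$, and $0\in\M_3$, observe that invertibility of $\D_2 F(x_0,y_0)$ is equivalent (via the chart diffeomorphisms) to invertibility of the Euclidean partial Jacobian, apply the classical implicit function theorem to the coordinate representation $\tilde F$, and pull the resulting implicit function back through the charts. The derivative formula then drops out of the chain rule applied to $x\mapsto F(x,g(x))\equiv 0$. Your identification of the only nontrivial bookkeeping---shrinking domains so all compositions are defined, and checking that $m_2=m_3$ so the Euclidean hypothesis is met---is accurate; there are no gaps.
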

Recall that the root mean square error (RMSE) is defined by 
\begin{equation}\label{eq:rmse}
\RMSE = \fronorm{M - X^*}/\sqrt{ns}.
\end{equation}
\begin{proposition}
Let the noise $\noiseomega \sim \calN(0, \sigma^2)$ for some given $\sigma\geq 0$ and let   $f(X,\U; \hat M)$ denote the cost function of~\eqref{eq:lambda_prob}. Consider $\left(X^*(\hat M), \U^*(\hat M)\right)$, an isolated local minimizer of $f$ for a given $\hat M$. Stein's Unbiased risk estimate of the RMSE is given by
\begin{align}\label{eq:sure}
\SURE := \sqrt{\hat R/ns}
\end{align}
where $\hat R$ is computed as in~\eqref{eq:stein} with 
\begin{align}
\dfrac{\partial X^*(\hat M)}{\partial \hat M} = - \left[ \nabla^2_{XX} f(X,\U; \hat  M)\right]^{-1}  \dfrac{\partial}{\partial \hat M}  \! \left(\nabla_X f(\hat M, g(\hat M);\hat M) \right).
\label{eq:stein_result}
\end{align}
where
 $$\dfrac{\partial}{\partial \hat M_{ij}}  \! \left(\nabla_X f(\hat M, g(\hat M);\hat M) \right)\!= -2\lambda \Delta_{ij} $$
and $\Delta_{ij}\in \R^{n\times s}$ is zero except for entry $(i,j)$ equal to $1$. 
\end{proposition}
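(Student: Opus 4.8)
The plan is to reduce the proposition to a single analytic computation, the \emph{divergence} $\sum_{i,j}\partial X^*_{ij}(\hat M)/\partial \hat M_{ij}$ appearing in~\eqref{eq:stein}. Once this quantity is available, the identity $\SURE := \sqrt{\hat R/ns}$ is immediate: by~\citep{stein1981estimation} the quantity $\hat R$ of~\eqref{eq:stein} is an unbiased estimator of $R=\mathbb{E}_\noiseomega[\fronorm{M-X^*(\hat M)}^2]$, so dividing by $ns$ and taking square roots matches the definition~\eqref{eq:rmse} of the RMSE. Hence the entire content of the statement lies in justifying~\eqref{eq:stein_result}, i.e.\ in differentiating the local minimizer $X^*(\hat M)$ with respect to the data $\hat M$.

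First I would view the minimizer as an implicit function of the parameter $\hat M$ and invoke the manifold implicit function theorem (Theorem~\ref{thm:implicit_function}). Set $\M_1=\Rns$ (the space of the parameter $\hat M$) and $\M_2=\Rns\times\GrassNq$ (the variable space of $(X,\U)$), and define $F(\hat M,(X,\U))$ to be the Riemannian gradient of $f(\cdot,\cdot;\hat M)$ at $(X,\U)$ (expressed in a local trivialisation of $\T\M_2$, so the codomain is a fixed manifold as required by the theorem). At an isolated local minimizer we have $F(\hat M,(X^*,\U^*))=0$, and $\D_2 F(\hat M,(X^*,\U^*))$ is exactly the Riemannian Hessian of $f$ at that point; the hypothesis that the minimizer is \emph{isolated} (nondegenerate) is precisely what guarantees that this Hessian is invertible, so the theorem applies and produces a smooth branch $\hat M\mapsto g(\hat M)=(X^*(\hat M),\U^*(\hat M))$ with $\D g(\hat M) = -(\D_2 F)^{-1}\D_1 F$.

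Next I would compute the parametric term $\D_1 F=\partial(\grad f)/\partial\hat M$. The key structural observation is that $\hat M$ enters the cost $f(X,\U;\hat M)=\fronorm{\P_\U\Phi_d(X)}^2+\lambda\fronorm{X-\hat M}^2$ \emph{only} through the quadratic penalty, which involves $X$ but not $\U$. Consequently the $\U$-component of $\D_1 F$ vanishes identically, while the $X$-component is obtained by differentiating $\nabla_X f=\nabla_X\fronorm{\P_\U\Phi_d(X)}^2+2\lambda(X-\hat M)$ in the explicit $\hat M$ with $(X,\U)$ held fixed, giving $\partial(\nabla_X f)/\partial\hat M_{ij}=-2\lambda\Delta_{ij}$, as claimed. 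Reading off the $X$-block of $\D g$ then yields~\eqref{eq:stein_result}, and substituting $\sum_{i,j}\partial X^*_{ij}/\partial\hat M_{ij}$ into~\eqref{eq:stein} completes the proof.

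The main obstacle, and the step I would treat most carefully, is the manifold geometry entering $\D_2 F$. Unlike the purely Euclidean SURE derivations, here $\D_2 F$ is the Riemannian Hessian on the product $\Rns\times\GrassNq$ and couples the Euclidean $X$-directions with the Grassmann $\U$-directions through the mixed blocks $\hess_{X\U}f$ and $\hess_{\U X}f$. Inverting it and extracting the $X$-block is where the Grassmann tangent-space structure must be handled correctly (formally via a Schur complement over the $\U$-block), and where the reduction to $[\hess_{XX}f]^{-1}$ in~\eqref{eq:stein_result} must be justified — namely that for the divergence it suffices to track how the explicit, $\U$-free parametric perturbation $-2\lambda\Delta_{ij}$ propagates through the $X$-equation. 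Verifying nondegeneracy of this Hessian at the computed minimizer, rather than merely assuming it, is the remaining delicate point.
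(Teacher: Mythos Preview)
Your approach mirrors the paper's proof exactly: set $F=\grad_\M f$, apply Theorem~\ref{thm:implicit_function} using that the Riemannian Hessian is invertible at an isolated minimizer, observe that $\partial(\grad_\U f)/\partial\hat M=0$ while $\partial(\nabla_X f)/\partial\hat M_{ij}=-2\lambda\Delta_{ij}$, and extract the $X$-block of $\D g$. The Schur-complement subtlety you flag --- that the mixed $X$--$\U$ blocks of the Riemannian Hessian should in principle enter the $X$-block of its inverse --- is glossed over in the paper's own proof as well, which simply asserts that ``the upper block of~\eqref{eq:dg} reads'' the formula with $[\nabla^2_{XX}f]^{-1}$ once the $\U$-component of $\D_1F$ vanishes; so your caution on this point is, if anything, more careful than the original.
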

\begin{proof}
For clarity, we write $(X^*,\U^*)$ for $\left(X^*(\hat M), \U^*(\hat M)\right)$. Recalling that $\M = \Rns \times \GrassNq$, we define $F\colon \mathbb{R}^{n\times s} \times \M \to \T \M$ by
\begin{equation}
F(\hat M, X, \U) =\grad_\M f(X, \calU;\hat M) =  \begin{bmatrix}
\nabla_X f(X, \U; \hat  M) \\
\grad_\U f( X, \U; \hat  M)
\end{bmatrix}.
\label{eq:F}
\end{equation}
where $ \nabla_X f(X, \U;\hat M) \in \R^{n \times s}$ is the Euclidean gradient of $f$ with respect to $X \in \R^{n \times s}$ and $\grad_\calU f(X, \U;\hat M) \in \T_\calU \GrassNq$ is the Riemannian gradient which belongs to the tangent space of $\GrassNq $ at $\U$. 
First-order necessary optimality conditions for Problem~\eqref{eq:lambda_prob} can be stated as 
\begin{align*}
F(\hat M, X, \U) = 0.
\end{align*}
We apply Theorem~\ref{thm:implicit_function} to the function $F$, with the identification of the variables $\hat M\in \M_1 = \Rns$ and $z=(X,\U) \in \M_2 = \M = \Rns \times \GrassNq$. The theorem applies to derivatives as abstract objects defined by tangent vectors. In particular, affine connections~\citep[Definition 5.1]{boumal2023}---including the Riemannian connection---satisfy the axioms of derivatives. When applying this theorem to the function $F$, we view the derivative of a vector field as the covariant derivative associated to the Riemannian connection. Therefore, the derivative of the map $F$ with respect to its second variable $z\in \M$, is given by the Riemannian Hessian of $f$, that is, $\D_2 F(\hat M, X, \U)  = \Hess_\M f(X,\U; \hat  M) $. Therefore, if $\Hess_\M f(X^*, \U^*; \hat  M)$ is invertible, there exists an open set $V_1\in \Rns$ and a map $ g\colon  V_1 \to \M$  such that $g(\hat M) = (X^*, \U^*)$ and $F(\hat M, g(\hat M)) =0$. Also, 
\begin{equation}
\dfrac{\partial g(\hat M)}{\partial \hat M} = \dfrac{\partial}{\partial \hat M} \left((X^*(\hat M), \U^*(\hat M)\right) = 
- \left[ \Hess_\M f(X^*, \U^*; \hat  M)\right]^{-1} \left[    \dfrac{\partial F(\hat M, g(\hat M))}{\partial \hat M}\right].
\label{eq:dg}
\end{equation}
If $(X^*,\U^*)$ is an isolated minimizer of~\eqref{eq:lambda_prob}, the matrix $\Hess f(X^*,\U^*)$ is positive definite and therefore invertible. In order to compute $\partial F(\hat M, g(\hat M))/\partial \hat M$, we see from the definition of $f(X,U;\hat M)$ in~\eqref{eq:lambda_prob} that $\grad_\U f(X,U;\hat M)$ does not depend on $\hat M$ and that $\nabla_X f(\hat M, g(\hat M);\hat M) $ depends on $\hat M$ through the term $2\lambda (X - \hat M)$. Thus, we have
\begin{align}
\dfrac{\partial}{\partial \hat M}  \!\left(\grad_\U f(\hat M, g(\hat M);\hat M) \right)\!= 0
\label{eq:g_zero}
\end{align}
and
 $$\dfrac{\partial}{\partial \hat M_{ij}}  \! \left(\nabla_X f(\hat M, g(\hat M);\hat M) \right)\!= -2\lambda \Delta_{ij} $$
 with 
 where $\Delta_{ij}\in \R^{n\times s}$ has entry $ij$ equal to one and the other entries are zero. Using~\eqref{eq:g_zero}, the upper block of~\eqref{eq:dg} reads 
 \begin{align*}
\dfrac{\partial X^*(\hat M)}{\partial \hat M} = - \left[ \nabla^2_{XX} f(X,\U; \hat  M)\right]^{-1}  \dfrac{\partial}{\partial \hat M}  \! \left(\nabla_X f(\hat M, g(\hat M);\hat M) \right).
 \end{align*}
This gives~\eqref{eq:stein_result}. Equation~\eqref{eq:sure} follows from~\eqref{eq:stein} and the definition of the RMSE.
\end{proof}

\subsection{Numerical results for denoising of algebraic varieties}
\label{sec:numerics_denoising}
In this section, we numerically evaluate the performance of our denoising method. Our code for the denoising of algebraic varieties can be found online at \url{https://github.com/flgoyens/variety-denoising}, currently with implementations in Matlab and Python. We use the Manopt~\citep{manopt} and Pymanopt~\citep{townsend2016pymanopt} toolboxes for the Matlab and Python versions of the code, respectively.

The Riemannian trust-region solver~\citep[Algorithm 6.3]{boumal2023} in the toolbox is our algorithm of choice; it is applied to Problem~\eqref{eq:lambda_prob} with default parameters. The trust-region algorithm underlying this solver has been shown to have fast local rate of convergence (at least superlinear, near a non-degenerate minimizer~\citep{absil2007trust}). Furthermore, under suitable global smoothness assumptions, this algorithm is guaranteed to converge to a first- or second-order critical point of the problem from an arbitrary initial guess~\citep{boumal2019global}, with a rate that can be quantified as follows: for a given accuracy parameter $\varepsilon$, it takes at most $\calO(\varepsilon^{-2})$ to generate an $\varepsilon$-approximate first-order critical point, while an $\varepsilon$-approximate second-order critical point is found within most $\calO(\varepsilon^{-3})$ iterations. 

The numerical examples in this section were generated using the Matlab version of our code, with the termination condition that the norm of the gradient in~\eqref{eq:lambda_prob} becomes smaller than $10^{-6}$. For an output of the algorithm $(X^*,\U^*)$, we define
\begin{align*}
\textrm{RESIDUAL} =  \fronorm{\P_{\U^*} \Phi_d(X^*)}^2,
\end{align*}
which is interpreted using~\eqref{eq:residual_denoising} as the least squares error of the system of polynomial equations that define the algebraic variety $V_{\U^*}$. The RESIDUAL tells us how close the columns of $X^*$ are to some algebraic variety. The RMSE tells us how close $X^*$ is to the original matrix $M$~\eqref{eq:rmse}. We begin with randomly generated examples. We use a monomial basis of degree $d=2$ for the feature map $\varphi_d$. 
\subsubsection*{Synthetic examples}

\paragraph*{Example 1: Denoising a circle}
 We generate a point cloud $\hat M \in \R^{2\times 150}$ as a noisy corruption of a set of points on the unit circle using $\sigma = \{10^{-3},10^{-2}, 10^{-1}, 2\cdot 10^{-1}\}$ for the standard deviation of the Gaussian noise ($\hat{M}$ in red in Figure~\ref{fig:circle}). We notice that the solution $X^*$ (blue) reached by the optimization algorithm is close to the original circle even for a visually large noise. For $\sigma = 10^{-2}$ and $\sigma = 2\cdot 10^{-1}$, the RESIDUAL reaches values of $10^{-9}$ and $10^{-5}$, respectively. The solver's output $X^*$ is therefore very well approximated by an algebraic variety. This example also makes it clear that the approach is more general than a classical polynomial approximation, as the approximation does not need to be the image of a polynomial function. Table~\ref{tab:circle} reports the RMSE~\eqref{eq:rmse} and SURE estimate of the RMSE~\eqref{eq:sure}. SURE accurately predicts the RMSE and that the RMSE increases proportionally with the standard deviation of the noise. 
 \begin{table}[!h]
\centering
\begin{tabu}{|c|c|c|c|c|}
\hline 
Noise $\sigma$ & $10^{-3}$ & $10^{-2}$ & $10^{-1}$ & $2\cdot 10^{-1}$\\ 
\hline 
SURE & $5.49 \cdot 10^{-4}$ & $6.76 \cdot 10^{-3}$  & $5.69 \cdot 10^{-2}$  & $1.22 \cdot 10^{-1}$ \\ 
\hline 
RMSE & $6.77 \cdot 10^{-4}$ &  $8.53 \cdot 10^{-3}$ & $7.52 \cdot 10^{-2}$ & $1.44 \cdot 10^{-1}$\\ 
\hline 
\end{tabu} 
\caption{SURE~\eqref{eq:sure} and RMSE~\eqref{eq:rmse} for denoising of a circle}
\label{tab:circle}
\end{table}

\begin{figure}
\centering
\subfigure[$\sigma = 10^{-2}$]{
\includegraphics[width = 0.45\textwidth]{./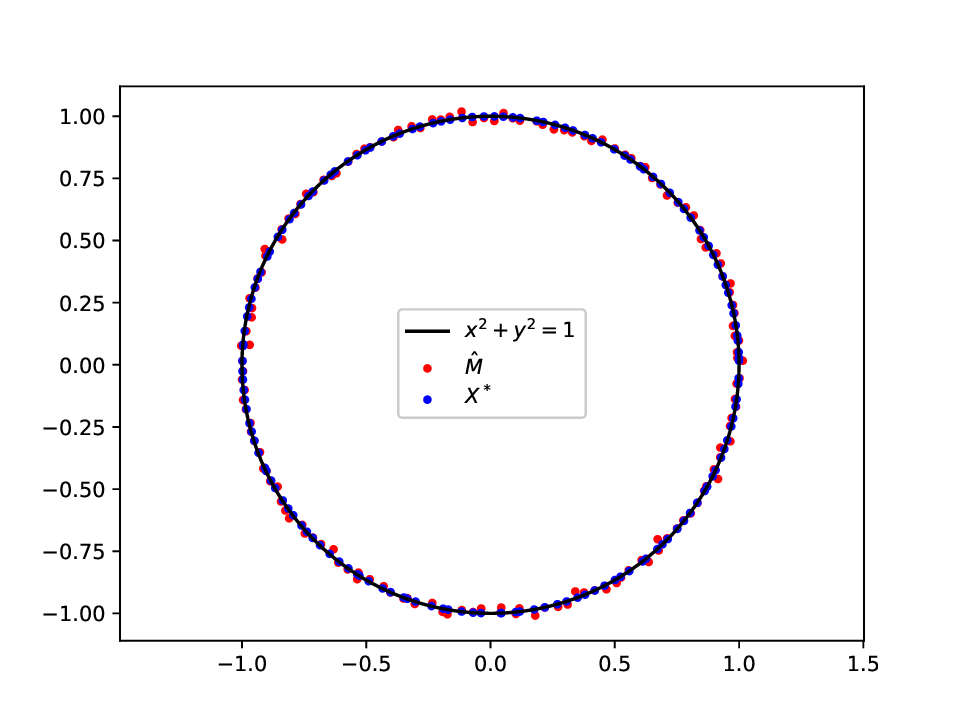}
}
%\quad
\subfigure[$\sigma = 10^{-1}$]{
\includegraphics[width = 0.45\textwidth]{./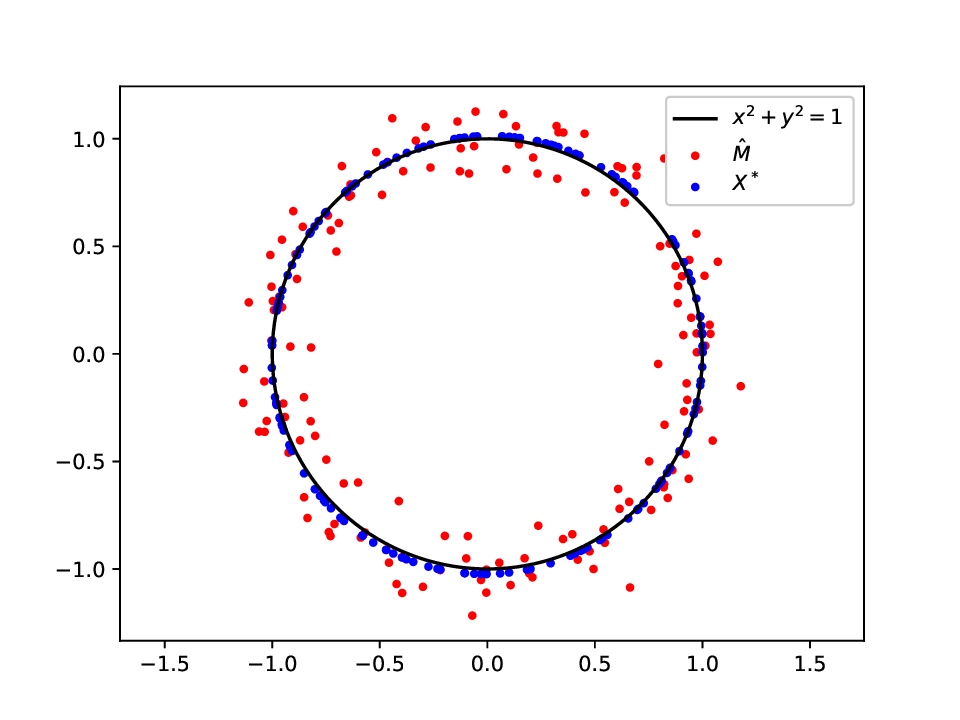}
}
\subfigure[$\sigma = 2\cdot 10^{-1}$]{
\includegraphics[width = 0.45\textwidth]{./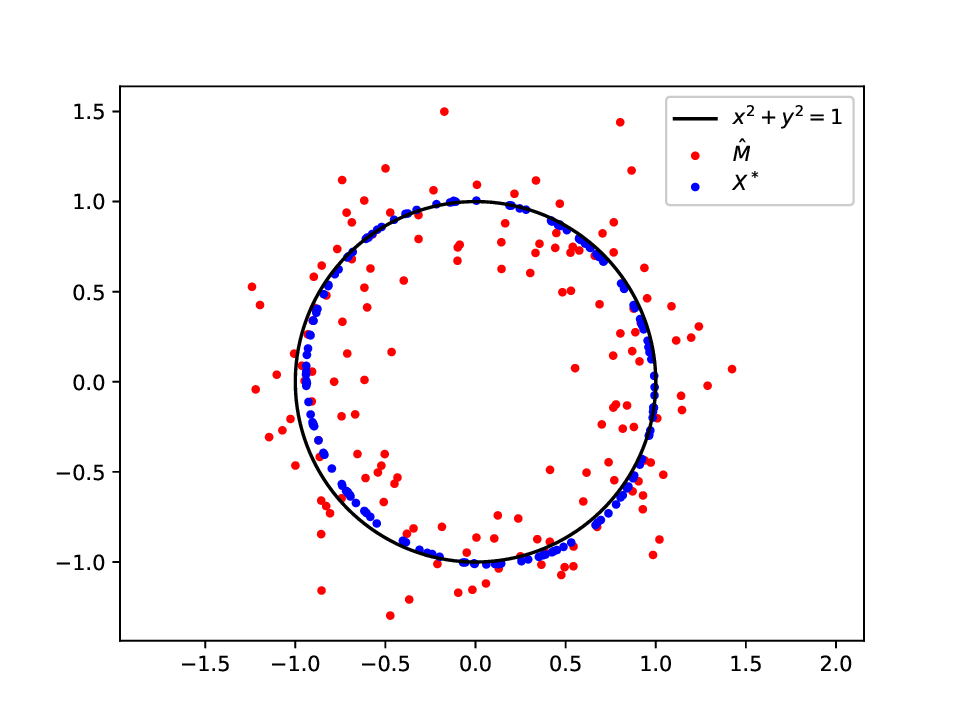}
}
\caption{Denoising a circle.}
\label{fig:circle}
\end{figure}

%\begin{figure}[!htb]
%\centering
%\includegraphics[width = 0.4\textwidth]{./figures/figures_smoothing/circle_good2_color_ok.eps}
%\caption{Denoising a circle, $\sigma = 0.2$.}
%\label{fig:circle}
%\end{figure}

\paragraph*{Example 2: Denoising a union of two subspaces}
In Figure~\ref{fig:cross}, we denoise a point cloud which is near the union of two subspaces with $\sigma = \{10^{-3},10^{-2}, 10^{-1}\}$ for the standard deviation of the noise. This shows that the algebraic variety doesn't need to be a smooth set. It is an algebraic variety described by polynomial equations of degree $2$, hence using polynomials of degree $2$ is sufficient. (The union of $k$ subspaces is an algebraic variety described by polynomial equations of degree $k$.) For $\sigma = 10^{-2}$, the output satisfies $ \textrm{RESIDUAL} = 3\cdot10^{-10}$. Table~\ref{tab:cross} reports the RMSE and the SURE estimate of the RMSE.
\begin{table}[!h]
\centering
\begin{tabu}{|c|c|c|c|}
\hline 
Noise $\sigma$ & $10^{-3}$ & $10^{-2}$ & $10^{-1}$  \\ 
\hline 
SURE & $7.05 \cdot 10^{-4}$ & $ 6.32 \cdot 10^{-3} $  & $7.04  \cdot 10^{-2}$ \\ 
\hline 
RMSE & $7.10 \cdot 10^{-4}$ &  $6.98 \cdot 10^{-3} $ & $ 7.37 \cdot 10^{-2}$ \\ 
\hline 
\end{tabu} 
\caption{SURE for denoising of a union of subspaces}
\label{tab:cross}
\end{table}

%\begin{figure}[!htb]
%\centering
%\includegraphics[width = 0.4\textwidth]{./figures/figures_smoothing/cross3.eps}
%\caption{Denoising a nonsmooth algebraic variety.}
%\label{fig:cross}
%\end{figure} 

\begin{figure}
\centering
%\subfigure[$\sigma = 10^{-3}$]{
%\includegraphics[width = 0.45\textwidth]{./}
%}
%\quad
\subfigure[$\sigma = 10^{-2}$]{
\includegraphics[width = 0.45\textwidth]{./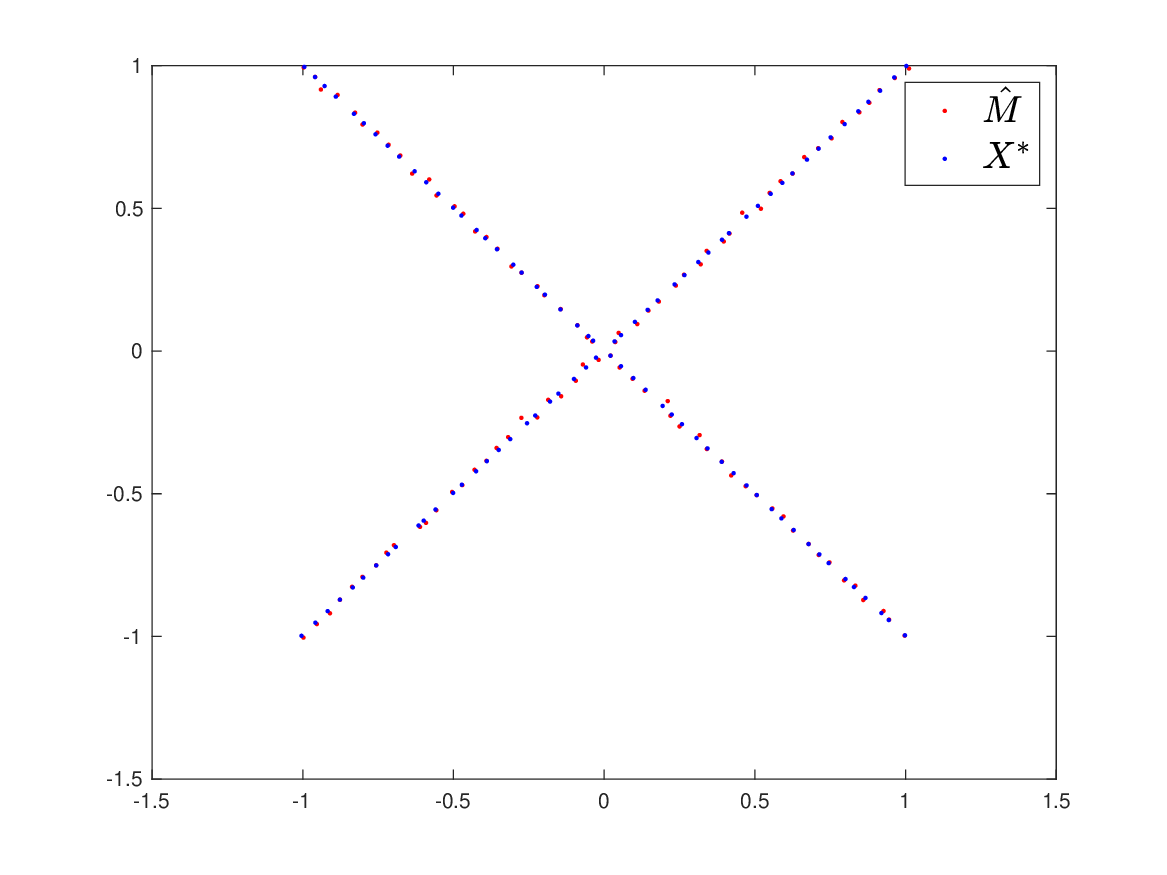}
}
\subfigure[$\sigma = 10^{-1}$]{
\includegraphics[width = 0.45\textwidth]{./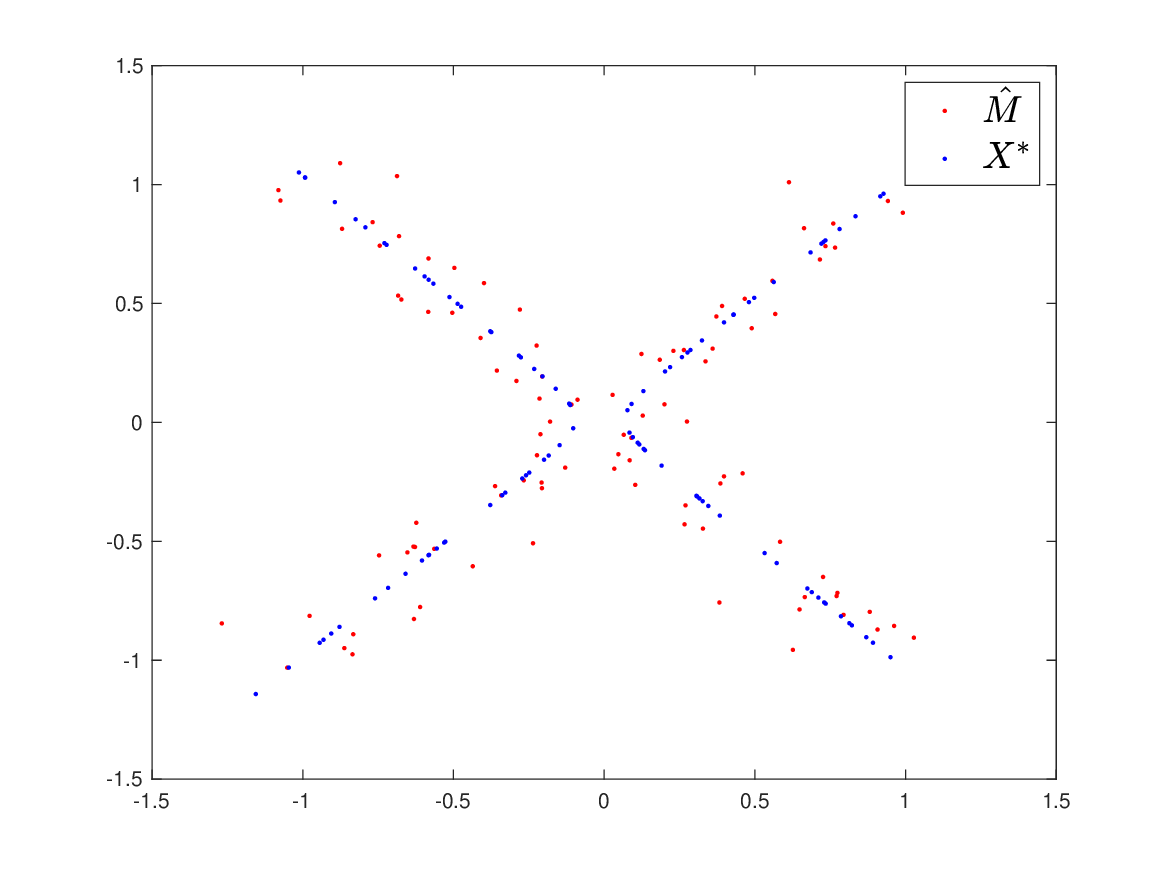}
}
\caption{Denoising a nonsmooth algebraic variety.}
\label{fig:cross}
\end{figure}

\subsubsection*{Dental Tomography Scans}  
Our research was originally motivated by an industrial partnership with the National Physical Laboratory (NPL), which concerned 3D dental scans (X-ray computed tomography scans). Each scan contains several millions of data points. It is desirable to remove the noise and outliers before they can be processed. Figure~\ref{fig:scans} shows an example of the dental scans available. After random subsampling---which helps visualization---it is composed of $2048$ data points in $\R^3$. 

\begin{figure}[!htb]
\centering
%\subfigure{
%\includegraphics[width = 0.3\textwidth]{./dentX1.eps}
\includegraphics[width = 0.3\textwidth]{./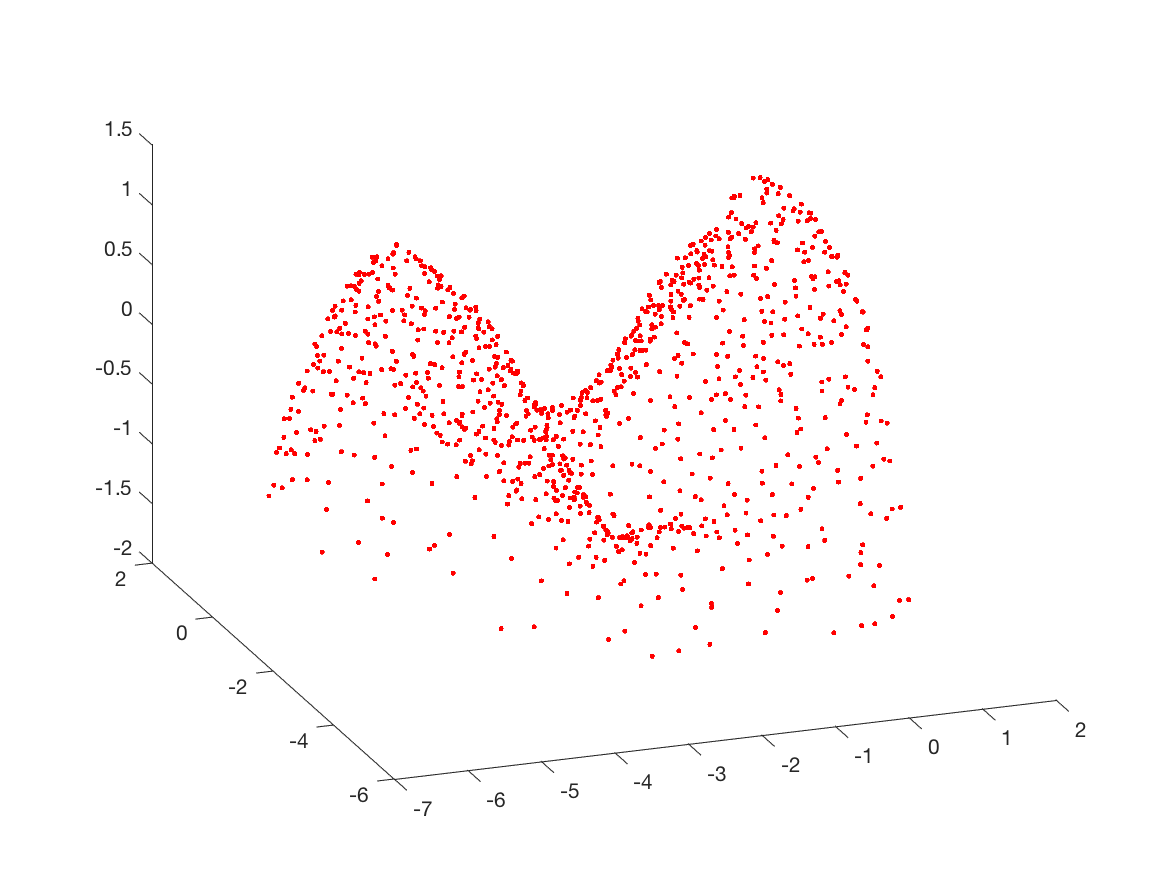}
%}
%\quad
%\subfigure{
%%\includegraphics[width = 0.3\textwidth]{./dent2.eps}
%\includegraphics[width = 0.3\textwidth]{./figure/dent3-red.eps}
%}
\caption{Tomographic dental scans (subsampled)}
\label{fig:scans}
\end{figure}
%norm(c'*phi(Y_grass))= 3.998277e-03

For data sets such as dental scans, it may not be obvious which degree of the feature map will yield the best result. In theory, increasing the degree should give a model with more degrees of freedom and a better fit.
Unfortunately, the dimension of the optimization problem $N \approx n^d/d!$ blows up as $d$ increases, even for moderate values of $n$. The number of data points necessary also increases with the degree, since $s > N-q$ is required~\aref{assu:samples}. Additionally, ill-conditioning may appear when the degree $d$ becomes large, this is especially true for monomial basis because $\Phi_d(X)$ is a multivariate Vandermonde matrix. 
Previous works using the monomial kernel for high-rank matrix completion problems restricted themselves to degrees two or three~\citep{ongie2017algebraic,fan2018nonlinear,goyens2022nonlinear}. We try values of $d$ going from $1$ to $5$ and choose the one that yields the best result, which is usually $d=2$ in practice. 

\paragraph*{Example 3: approximation of a dental scan by an algebraic variety} Figure~\ref{fig:denoise-tooth} shows an XCT scan in red and the output of our denoising algorithm in blue. The value RESIDUAL has been reduced from the order of 1 to the order of $10^{-3}$ by the algorithm, and $X^*$ is therefore closer to an algebraic variety than $\hat M$. Since no noise was artificially added, the value of $\sigma$ for the noise that represents the mismatch between the real dental scan and the algebraic variety model is unknown. For $\sigma = 10^{-2}$, Stein's estimation of the RMSE gives $\SURE = 0.21$. 
\begin{figure}[!htb]
\centering
\includegraphics[scale=0.3]{./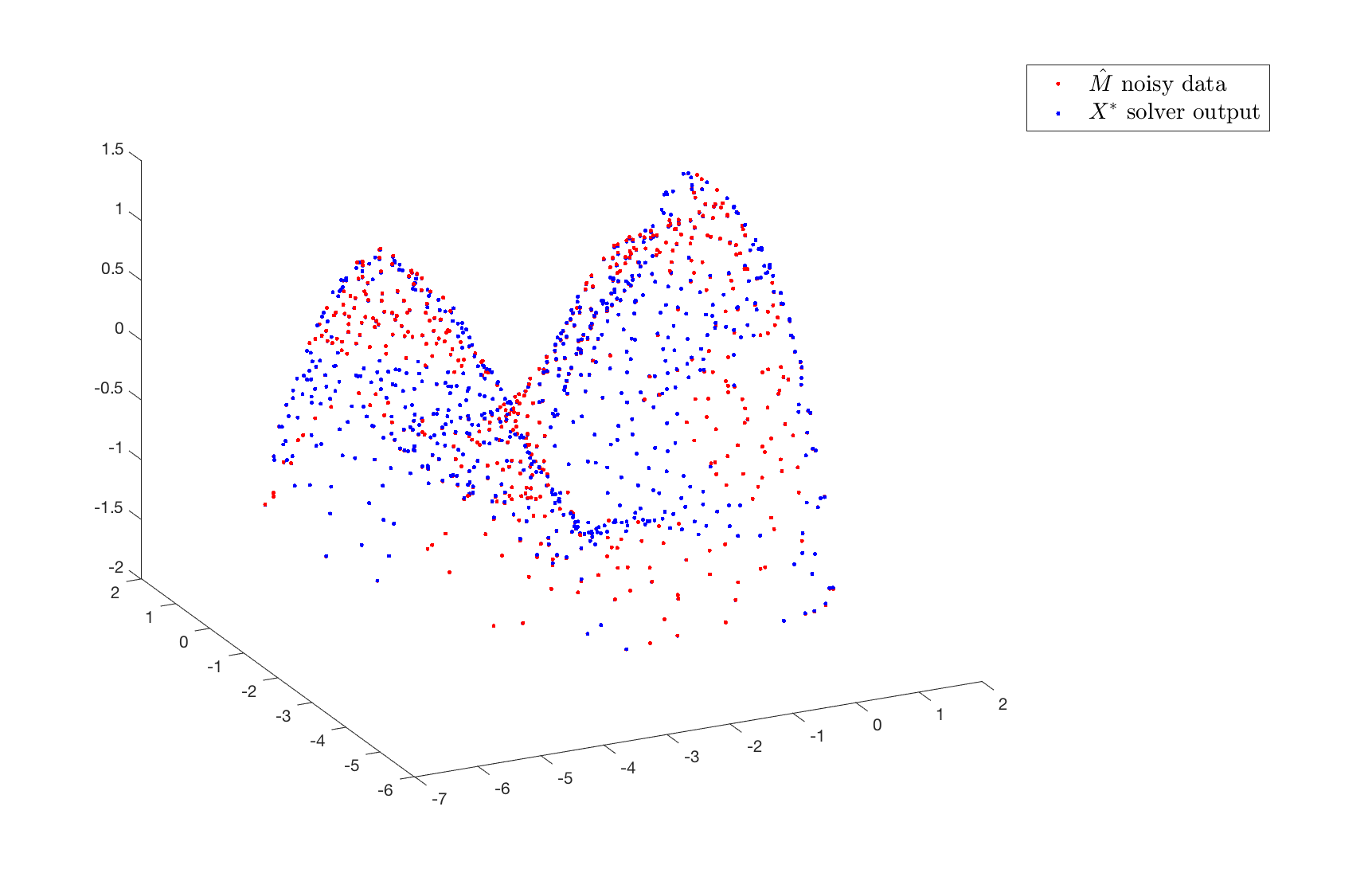}
\caption{Original point cloud and denoised version}
\label{fig:denoise-tooth}
\end{figure}

\FloatBarrier
\section{Registration of algebraic varieties}
\label{sec:registration}
We return to the registration problem presented in the introduction. Consider two matrices $M_1\in \R^{n\times s_1}$ and $M_2\in \R^{n\times s_2}$ that contain data points belonging to two algebraic varieties $V_1$ and $V_2$ that overlap through a rigid transformation (\aref{assu:variety_model}). Assume further that the algebraic varieties $V_1, V_2$ have degree at most $d$ and are described by $q$ linearly independent polynomials, for some $0<q\leq N$; and that $M_1$ and $M_2$ both satisfy~\aref{assu:samples}, so that $\min(s_1, s_2) \geq N-q$. We are given two point clouds $\hat{M}_1$ and $\hat{M}_2$, which are approximate versions of $M_1$ and $M_2$:
\begin{align*}
\hat M_1 &= M_1 + \noiseomega_1\\
\hat M_2 &= M_2 +\noiseomega_2,
\end{align*}
where $\noiseomega_1, \noiseomega_2$ are noisy perturbations, with $\fronorm{\noiseomega_1}\ll \fronorm{M_1}$ and $\fronorm{\noiseomega_2}\ll \fronorm{M_2}$. Our goal in this section is to estimate the rigid transformation $\mathcal{T}\colon \R^n \to \R^n $ which makes $V_1$ and $V_2$ overlap, or, more precisely, such that $\mathcal{T}(M_1)$ belongs to $V_2$.

\subsection{Formulation as an optimization problem}
 To achieve this, we first attempt to recover $M_1$ and $M_2$ from $\hat M_1$ and $\hat M_2$, by solving twice the denoising problem~\eqref{eq:lambda_prob}, as described in Section~\ref{sec:smoothing}. This yields estimates $X_1\in \Rns$ and $X_2\in \Rns$, along with subspaces $\U_1 \in \GrassNq$ and $\U_2\in \GrassNq$ which define the algebraic varieties that $X_1$ and $X_2$ (approximately) belong to. 
% This ensures that $\fronorm{\P_{\calU_2} \Phi_d(X_2)} \approx 0$. 
 Then, we estimate a transformation $\calT$ such that $\calT(X_1)$ belongs to the algebraic variety defined by $\calU_2$. 
 
Let $x_1^{(1)}, x_2^{(1)} ,  \cdots  x_{s_1}^{(1)}$ denote the columns of $X_1$ and let $p_1, \dots, p_q$ denote the polynomials that define $V_{\calU_2}$. To find a rotation $Q\in \SO(n)$ and a vector $a\in \Rn$ such that the points $\mathcal{T}\left(x_i^{(1)}\right) = Q x_i^{(1)} + a $ belong to $V_{\calU_2}$ for all $1\leq i\leq s_1$, we  
\begin{align}\label{eq:cost_registration}
\underset{Q,a}{\minimize}  \sum_{i=1}^{s_1}\sum_{j=1}^q  p_j(Q x^{(1)}_i + a)^2.
\end{align}
%This is a least squares residual of a system of polynomial equations, which measures how, after application of the transformation, the points in $X_1$ satisfy the polynomial equations of the algebraic variety identified for $X_2$.
The distinction with the least squares minimization~\eqref{eq:registartion-ls} appears clearly. We do not minimize a point-to-point distance; instead, we are fitting points to an algebraic variety and want each column of $X_1$---after application of the rigid transformation---to satisfy the equations that define $V_{\calU_2}$. Using~\eqref{eq:residual_denoising}, we write~\eqref{eq:cost_registration} using the variable $\calU_2\in \GrassNq$ and the matrix of polynomial features:
\begin{align*}
 \sum_{i=1}^{s_1}\sum_{j=1}^q  p_j(Q x^{(1)}_i + a)^2 =\sum_{i=1}^{s_1} \norm{ \P_{\U_2}  \varphi_d(Q x^{(1)}_i + a)}_2^2 = \fronorm{\P_{\U_2}  \Phi_d(Q X_1 + a\mathbf{1}_{1\times s_1})}^2,
\end{align*}
where $\mathbf{1}_{1\times s_1}$ is a row vector of size $s_1$ that is full of ones. This leads to the following two-step strategy to identify the transformation between $V_1$ and $V_2$.
\paragraph*{Step 1: Algebraic variety identification and denoising}
\begin{equation}
(X_1, \U_1) := \left\lbrace
\begin{aligned}
& \underset{X,\U}{\argmin}
& & \fronorm{ \P_{\U}\Phi_d\left(X\right) }^2 + \lambda \fronorm{ X - \hat M_1}^2   \\
& & &  \U \in \Grass(N,q).
\end{aligned}
\right.
\label{eq:smoothing_noisy_regi1}
\end{equation}
\begin{equation}
(X_2, \U_2) := \left\lbrace
\begin{aligned}
& \underset{X,\U}{\argmin}
& & \fronorm{ \P_{\U}\Phi_d\left(X\right) }^2 + \lambda \fronorm{ X - \hat M_2}^2   \\
& & &  \U \in \Grass(N,q).
\end{aligned}
\right.
\label{eq:smoothing_noisy_regi2}
\end{equation}
\paragraph*{Step 2: Registration}
\begin{equation}
(Q^*, a^*):= 
 \left\lbrace
\begin{aligned}
& \underset{Q,a}{\min}
& & \fronorm{\P_{\U_2} \Phi_d\left(Q X_1 + a \mathbf{1}_{1\times s_1}\right) }^2   \\
& \text{subject to} & & Q \in \SO(n) \\
& & &  a \in \R^n.
\end{aligned}
\right.
\label{eq:regi_noisy}
\end{equation}
For details on solving Step 1, see Section~\ref{sec:smoothing}. The registration step~\eqref{eq:regi_noisy} is nonconvex, and it is also a smooth optimization problem on a manifold that we minimize using Riemannian optimization~\citep[Algorithm 6.3]{boumal2023}. We initialize the transformation $\calT$ with a random rotation and random vector $a$ of unit norm. If the transformation $\calT$ includes a reflection as well as a rotation, the optimization should not be done over $\SOn$ but the other connected component of $\On$, with matrices of determinant $-1$.
\begin{remark}
Another possibility would be to compute the registration in a single step, an optimization problem over the variables $(\calU, Q, a)$ that concatenates both point clouds. We found that this is harder to solve in practice. Performing steps 1 and 2 separately is suitable because we are able to estimate the algebraic variety from the samples of only one of the point clouds.
%\begin{equation}
%\left\lbrace
%\begin{aligned}
%& \underset{\U, Q, a}{\min}
%&  & \fronorm{\P_{\U}  \Phi_d([M_2, QM_1+a\mathbf{1}_{1\times s_1}])}^2\\
%& \text{s.t.}
%& &Q\in \SO(n)\\
%&&& \U \in \Grass(N,q)\\
%& &  &a \in \R^n.
%\end{aligned}
%\right.
%\label{eq:2in1}
%\end{equation}
%where $[\cdot, \cdot]$ concatenates two matrices column-wise.
\end{remark}
\begin{remark}
 Interestingly, this two-step approach is asymmetric. If one point cloud is included in the other, the larger point cloud must be used as the target ($M_2$) and the smaller one as the source ($M_1$).
 \end{remark}

\subsection{Numerical results for registration}
\label{sec:numerics_registration}
Our code for the registration problem is available in Python at \url{https://github.com/flgoyens/variety-registration}. In the numerical examples that follow, the default implementation of the Riemannian trust-region algorithm from PyManopt was used where the full Riemannian Hessian is computed through automatic differentiation~\citep{townsend2016pymanopt}. This section illustrates the efficiency of our approach to identify a rigid transformation between two quadratic curves or surfaces (synthetic test problems), or two  dental scans. We use the monomial basis of degree $d=2$ for the feature map $\varphi_d$. We set the dimension $q=1$, since the points clouds are hyper surfaces of dimension $n-1$. 

For the synthetic examples, we take the samples in $M_2$ as randomly generated points that belong to the quadratic $x \mapsto  x^2$. The samples in $M_1$ come from a rotated and translated version of that curve, for some random rotation and translation that we attempt to recover. We then add random matrices of Gaussian noise $\noiseomega_1, \noiseomega_2 \sim \mathcal{N}(0, \sigma^2)$, which yields $\hat M_1$ and $\hat M_2$. The quality of the solution returned by the solver is assessed by the residual 
\begin{align*}
\textrm{RESIDUAL} = \fronorm{\P_{\U_2} \Phi_d\left(Q^* X_1 + a^*\mathbf{1}_{1\times s_1}\right) }^2,
\end{align*}
which is interpreted using~\eqref{eq:cost_registration} as the residual of the polynomial equations that define an algebraic variety. We emphasize that the measure $\fronorm{Q^* X_1 + a^*\mathbf{1}_{1\times s_1} - X_2}^2$ is meaningless. We are not trying to perform a point-to-point matching, which is even undefined for point clouds of different sizes. In the figures below, the upper image shows the two point clouds of input, and the lower image shows the output of the algorithm after registration. 

These numerical results illustrate that our approach successfully finds an accurate rigid transformation in these simple cases. Due to the nonconvexity of $\SOn$ and the random initialisation of the algorithm, the solver sometimes fails to find the correct transformation and appears to find a local minimum. We then restart Problem~\eqref{eq:regi_noisy} with a new initial rotation and translation, and report that this usually finds the global minimum under five attempts. This is likely helped by the fact that registration problems are most often considered in $\R^2$ or $\R^3$, and the dimension of the search space is small. A better initialization of the transformation would lower the chances to find a suboptimal local minimizer.  

\paragraph*{Example 5}
In Figure~\ref{fig:regi_noiseless_3d}, the data is on a quadratic surface in $\R^3$ (zero noise added). The input data has dimensions $M_1 \in \R^{3\times 200}$, $M_2 \in \R^{3\times 200}$. The result satisfies $\textrm{RESIDUAL}\approx 10^{-7}$. 
\begin{figure}[!htb]
\includegraphics[width = 1\textwidth]{./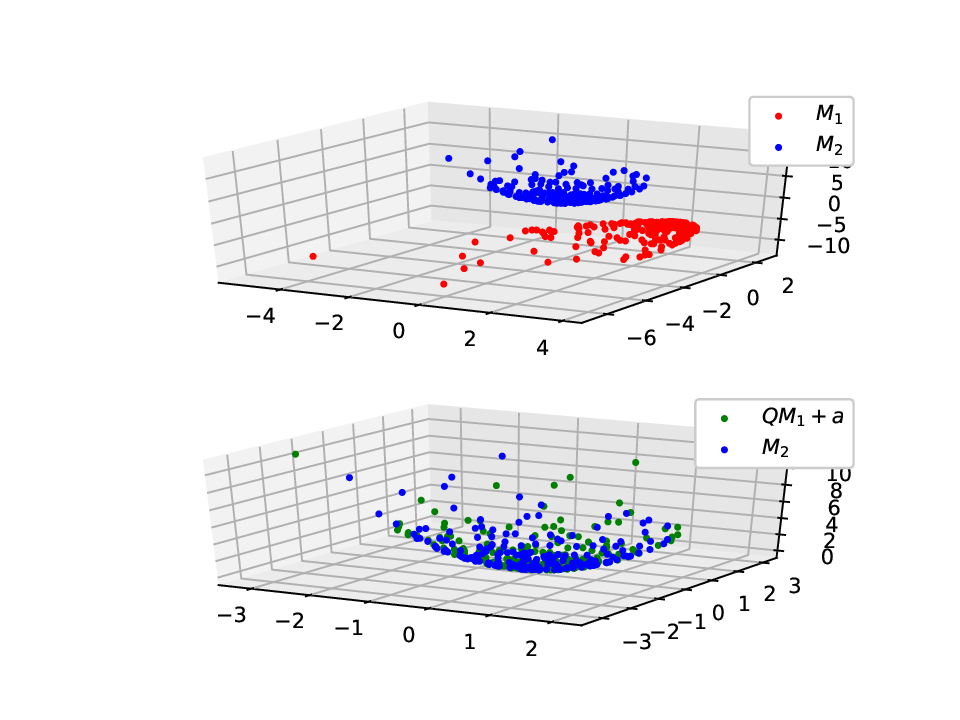}
\caption{Registration for a quadratic surface in $\R^3$.}
\label{fig:regi_noiseless_3d}
\end{figure}

\paragraph*{Example 6}
In Figure~\ref{fig:registration_noise_sigma2}, the standard deviation of the noise is $\sigma = 5\cdot 10^{-2}$ and the algorithm's output satisfies $\textrm{RESIDUAL} \approx 10^{-4}$. We see that despite the presence of noise in the original data, the transformation is well estimated to create an overlap.
\begin{figure}[!htb]
\centering
\includegraphics[width = 0.7\textwidth]{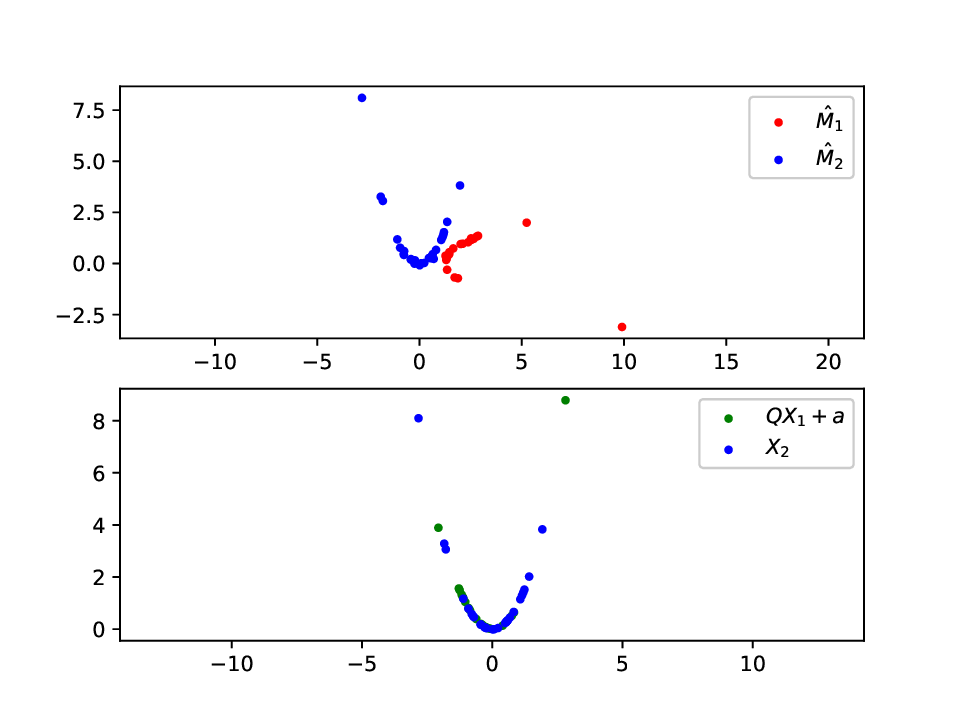}
\caption{Noisy registration with $\sigma = 5\cdot 10^{-2}$}
\label{fig:registration_noise_sigma2}
%Smoothing 1e-7 regi is 0.043
\end{figure}

\paragraph*{Example 7}
In Figure~\ref{fig:registration_noise_sigma1}, the magnitude of the noise is increased to $\sigma = 10^{-1}$ and the output satisfies $\textrm{RESIDUAL}  \approx 10^{-2}$. The transformation is again estimated correctly, with the residual increasing proportionally to the magnitude of the noise. 
\begin{figure}[!htb]
\centering
\includegraphics[width = 0.7\textwidth]{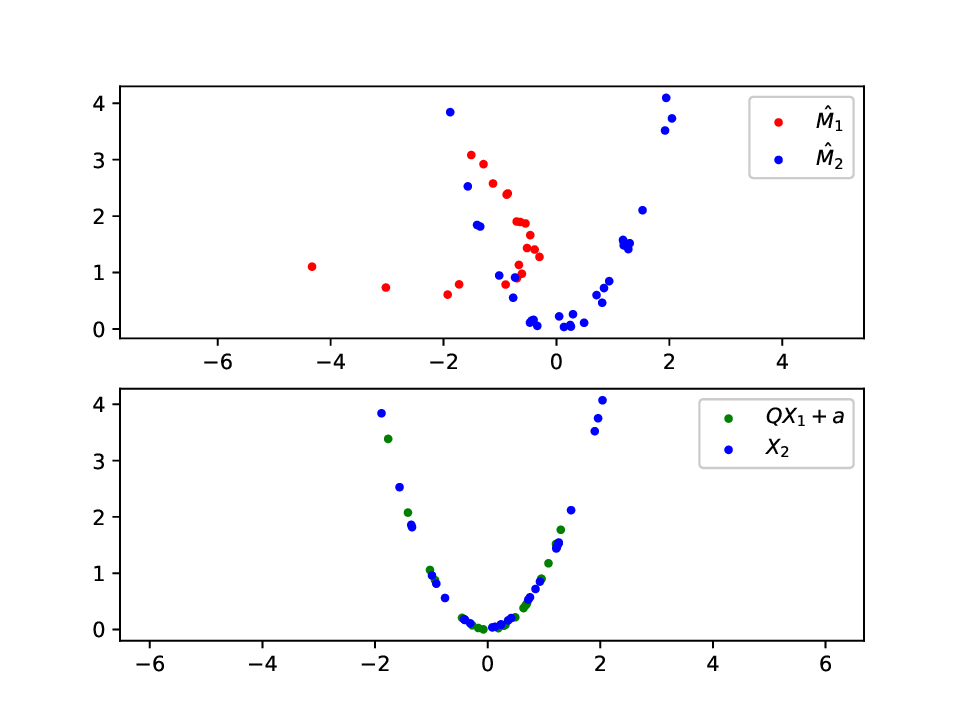}
\caption{Noisy registration with $\sigma = 10^{-1}$}
% f =  0.025 regi , smoothing f: +1.797713e-06 
\label{fig:registration_noise_sigma1}
\end{figure}

Next we show examples where there is a partial overlap between the source and target point clouds. 

\paragraph*{Example 8: partial overlap}
In Figures~\ref{fig:registration_noise2_overlap} and~\ref{fig:registration_noise1_overlap}, the point cloud $\hat M_1$ only overlaps with a small part of $\hat M_2$.  The noise levels are of $\sigma = 10^{-2}$ and $\sigma= 10^{-1}$ and give RESIDUAL values of $10^{-4}$ and $10^{-2}$, respectively. 

\begin{figure}[!htb]
\centering
\includegraphics[width = 0.7\textwidth]{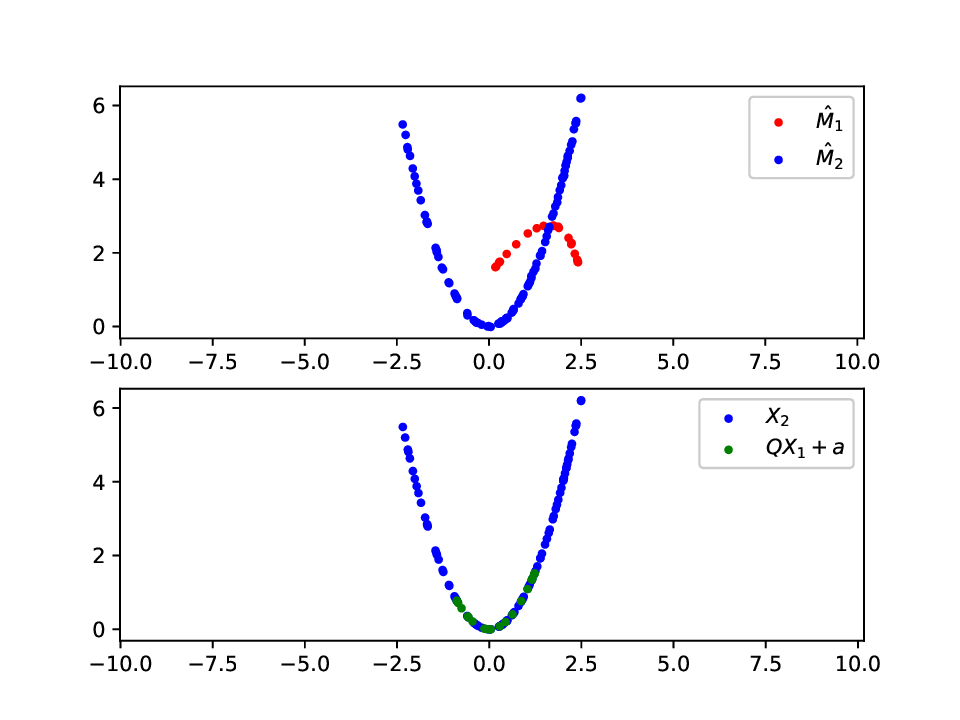}
\caption{Noisy registration with $\sigma = 10^{-2}$ and partial overlap}
\label{fig:registration_noise2_overlap}
%0.00012 REGISTRATION 
% smoothing 10-8
\end{figure}

\begin{figure}[!htb]
\centering
\includegraphics[width = 0.7\textwidth]{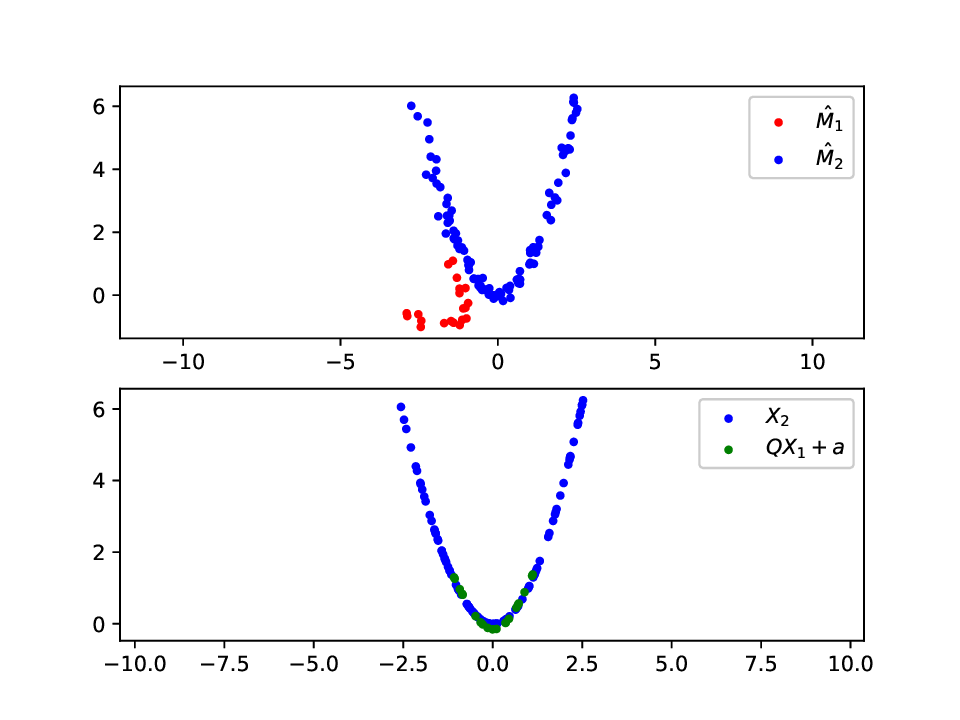}
\caption{Noisy registration with $\sigma = 10^{-1}$ and partial overlap}
\label{fig:registration_noise1_overlap}
%f =  0.084 regi 
% smoothing f: +9.971455e-06 
\end{figure}

\paragraph*{Example 9: no overlap}
In Figure~\ref{fig:registration_noise_nooverlap}, we push things even further and show that the registration may be possible even in cases where there is no overlap between the point clouds. Our approach makes this possible because the points belong to a common algebraic variety. The noise level is set to $\sigma = 10^{-2}$ and $\textrm{RESIDUAL}  \approx 10^{-4}$.

\begin{figure}[!htb]
\centering
\includegraphics[width = 0.7\textwidth]{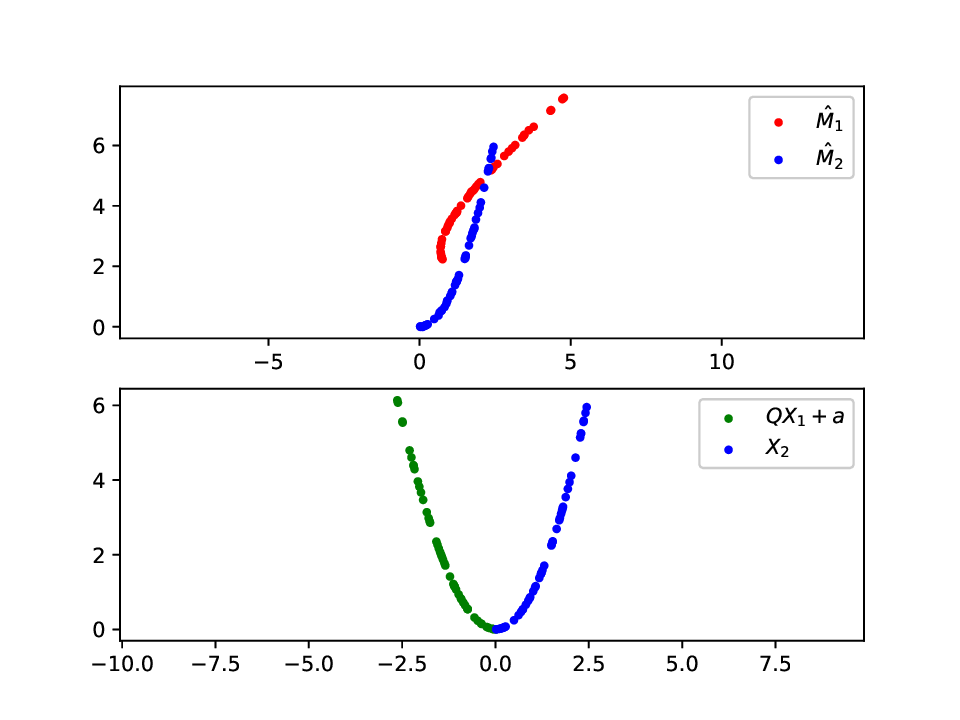}
\caption{Noisy registration with $\sigma = 10^{-2}$ and no overlap}
\label{fig:registration_noise_nooverlap}
%f_end =  0.0006 for registration 
% smoothing f: +3.907114e-08   |grad|: 6.372438e-08
\end{figure}

\paragraph*{Example 10: registration of dental scan}
We consider a computed tomography (CT) dental scan of dimension $3 \times 2048$. The data is naturally noisy and we use the algebraic variety model as an approximation. Nevertheless, we are able to estimate the transformation between two identical versions of the scan with some level of accuracy. The degree which yields the best result for the monomial features is $d=2$ and the output of the solver gives $\textrm{RESIDUAL} \approx 10^{-1}$. 
\begin{figure}[!htb]
\centering
\includegraphics[width = 1\textwidth]{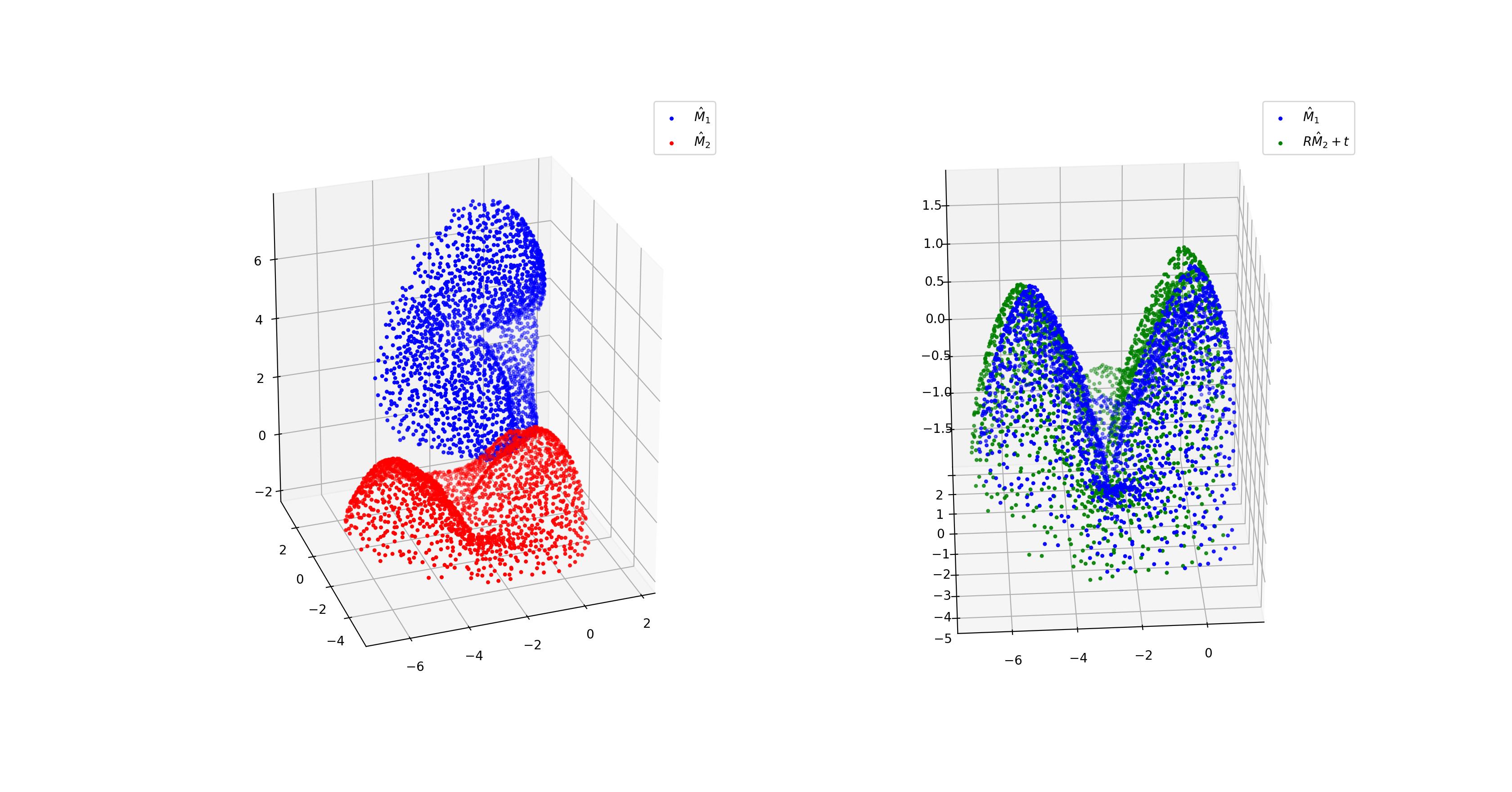}
\caption{Registration for dental scan.}
\end{figure}
\FloatBarrier

\section{Conclusions}
We present a framework which uses polynomial feature maps to approximate point clouds by algebraic varieties and perform point cloud registration based on these approximations. The approach is a conceptually appealing way to perform denoising on algebraic varieties and numerical results show high accuracy in the approximation for various noise levels, with a theoretical estimate of the accuracy using Stein's unbiased risk estimate. We identify the polynomial equations that define the algebraic variety from a set of noisy samples. The use of second-order Riemannian optimization methods allows to achieve high accuracy.

For the registration problem, we show numerical evidence that we are able to overlap point clouds which are approximated by algebraic varieties. We observe some robustness to noise and the method performs even on challenging data such as dental scans. An advantage of our approach over common algorithms for rigid registration is that we do not assume exact point matching and our method achieves good results even in cases of partial overlap between the two point clouds. %Admittedly, our method may behave poorly on point clouds which cannot be approximated by an algebraic variety. 
  
 Our framework is so far best suited to low-dimensional data sets, because the dimension of the feature space $N\approx n^d/d!$ increases rapidly with $n$ and $d$; and the number of data points must be large enough to satisfy~\aref{assu:samples}. Several directions should be investigated to improve the scaling of these methods. The dimension of the feature space could be reduced using low-dimensional representations of the features, in the spirit of~\citep{rahimi2007random}. To increase the degree---for challenging surfaces---it would be important to use orthogonal polynomial basis to avoid the ill-conditioning of the monomials. Instead of increasing the degree of the approximation, it may be possible to divide the point cloud in patches and apply an approximation by an algebraic variety of degree $2$ on each patch. 

The scalability of the algorithm with respect to the number of data points $s$ may be improved by viewing the cost functions in~\eqref{eq:lambda_prob} and~\eqref{eq:regi_noisy} as a finite sum over the data points and using stochastic optimization methods~\citep{kohler2017subsampled,xu2020newton}. 

\paragraph{Acknowledgement}
The authors would like to thank Wenjuan Sun from the National Physical Laboratory (UK) who suggested the point cloud alignment problem that partially motivated this research and for providing the dental scan data.

% A current limitation of our implementation is that it works best when using polynomial features of degree $2$, even for complex data sets such as dental scans. The dimension of the feature space $N$ as well as the number of data points necessary increases exponentially with the degree $d$ of the polynomial features.
% In addition of a potential issue of ill-conditioning for larger degrees, which is due to the monomial basis and could be alleviated by working with a different polynomial basis, 
% The exponential increase in the dimension with $d$ makes the problems much harder to solve for increasing degrees as large as $3$ or $4$. Despite this fact, it would be worthwhile to develop tools to tackle problems of larger scale and reach better results for a degree higher than two, which likely also requires to use a different polynomial basis than the monomials. 

%This would allow to process a larger number of data points.  

%\td{future direction: one could also divide the objects in patches and use low order polynomial approximations on these patches. That would be a bit like a finite element method with order of basis function that is higher than one.}
\bibliographystyle{apalike} 
\bibliography{/Users/florentin/Documents/library/zotero2.bib}   

\end{document}